\theoremstyle{plain}
\newtheorem{corollary}{Corollary}[section]
\newtheorem{definition}[corollary]{Definition}
\newtheorem{lemma}[corollary]{Lemma}
\newtheorem{prp}[corollary]{Proposition}
\newtheorem{remark}[corollary]{Remark}
\newtheorem{thm}[corollary]{Theorem}
\newfont{\sBlackboard}{msbm10 scaled 900}
\newcommand{\mylabel}[1]{\label{#1}
	\ifx\undefined\stillediting
	\else \fbox{$#1$}\fi }
\newcommand{\BE}{\begin{equation}}
\newcommand{\EEQ}{\end{equation}}
\newcommand{\rfb}[1]{\mbox{\rm
		(\ref{#1})}\ifx\undefined\stillediting\else:\fbox{$#1$}\fi}
\newfont{\Blackboard}{msbm10 scaled 1200}
\newfont{\roma}{cmr10 scaled 1200}
\newcommand{\bb}{\begin{equation}}
\newcommand{\bbb}{\end{equation}}
\newcommand{\mm}    {{\hbox{\hskip 0.5pt}}}
\newcommand{\bluff} {{\hbox{\raise 15pt \hbox{\mm}}}}
\def\section{\@startsection {section}{1}{\z@}{-3.5ex plus -1ex minus
		-.2ex}{2.3ex plus .2ex}{\large\bf}}
\begin{document}
\title{Basic results of fractional Orlicz-Sobolev space and applications to non-local problems}
\author{Sabri Bahrouni, Hichem Ounaies and Leandro S. Tavares  }

%\small $^{1\,}$Unidade Acad\^emica de Matem\'atica,
%Universidade Federal de Campina Grande,\\ \small
%58429-900, Campina Grande - PB, Brazil\\
%\small $^{2\,}$Institute of Mathematics ``Simion Stoilow" of the Romanian Academy,\\ \small %P.O. Box 1-764, 014700 Bucharest, Romania \\
% \small $^{3\,}$Department of Mathematics, University of Craiova, Street A.I. Cuza No. 13,\\ %\small 200585 Craiova, Romania}

\maketitle

\begin{abstract}
In this paper, we study the interplay between Orlicz-Sobolev spaces $L^{M}$ and $W^{1,M}$ and fractional Sobolev spaces $W^{s,p}$. More precisely, we give some qualitative properties of the new fractional Orlicz-Sobolev space $W^{s,M}$, where $s\in (0,1)$ and $M$ is a Young function. We also study a related non-local operator, which is a fractional version of the nonhomogeneous $M$-Laplace operator. As an application, we prove existence of weak solution for a non-local problem involving the new fractional $M-$Laplacian operator.
\end{abstract}

{\small \textbf{2010 Mathematics Subject Classification:} Primary: 35J60; Secondary: 35J91, 35S30, 46E35, 58E30.}

{\small \textbf{Keywords:} Fractional Orlicz-Sobolev space, Fractional $M-$Laplacian, non-local problems, existence of solution}

%%%--- Introduction ---------------------------------------------------------------------------------------------------

\section{Introduction}
Recently, great attention has been focused on the study of fractional and non-local operators of elliptic type, both for pure mathematical research and in view of concrete real-world applications. In most of these applications a fundamental tool to treat these type of problems
is the so-called fractional order Sobolev spaces that for $0<s<1\leq p<\infty,$ are defined as
$$W^{s,p}(\Omega)=\bigg{\{}u\in L^{p}(\Omega):\ \frac{u(x)-u(y)}{|x-y|^{\frac{N}{p}+s}}\in  L^{p}(\Omega\times\Omega)\bigg{\}},$$
where $\Omega\subset\mathbb{R}^{N}$  is an open set.
The literature on non-local operators and on their applications is very interesting and, up to now, quite large. After the seminal papers by Caffarelli {\it et al.} \cite{11,12,13}, a
large amount of papers were written on problems involving the
fractional diffusion operator $(-\Delta)^{s}$  ($0<s<1$). We can quote \cite{4,14,21,23,25,26} and the references therein.  We also refer to the recent monographs \cite{14,22} for a thorough variational approach of  non-local problems.\\
On the other hand, for some nonhomogeneous materials
(such as electrorheological fluids, sometimes referred to as ``smart
fluids"), the standard approach based on Lebesgue and Sobolev spaces $L^p$ and $W^{1,p}$, is not adequate.
This leads to the study of {\it variable exponent} Lebesgue and Sobolev spaces, $L^{p(x)}$ and $W^{1,p(x)}$, where $p$ is a real-valued function.
Variable exponent Lebesgue spaces appeared in the literature in $1931$ in the paper by Orlicz \cite{orlicz}. We refer the reader to \cite{Die, Fan, radrep,Ruz, zi} for more details on Sobolev space with variable exponent.\\

A natural question is to see what results can be recovered when the standard
$p(x)$-Laplace operator is replaced by the fractional $p(x)-$Laplacian. It is worth mentioning that there are some papers concerning related equations involving the fractional $p(x)-$Laplace operator. In fact, results for the fractional Sobolev spaces with variable exponent and fractional
$p(x)-$Laplace equations are few, for example, we refer to \cite{5,6,16,27}.

In the theory of PDEs, when trying to relax some conditions on the operators, such as growth conditions, the problem can not be formulated with classical Lebesgue and Sobolev spaces with variable exponents. Hence, the adequate functional spaces is the so-called Orlicz spaces. More precisely, if in the definition of the ordinary Sobolev space $W^{1,p}(\Omega)$, the role played by the Lebesgue space $L^{p}(\Omega)$ is assumed instead by a more general Orlicz space $L^{M}(\Omega)$, the resulting structure is called an Orlicz-Sobolev space and denoted $W^{1,M}(\Omega)$, where $M$ is a Young function admitting an integral representation $M(t)=\displaystyle\int_{0}^{|t|}m(s)ds$ and $m$ assumes some conditions (see section $2$). Classical Sobolev and Orlicz-Sobolev spaces play a significant role in many fields of mathematics, such as partial differential equations. For more details on the theory of Orlicz and Orlicz-Sobolev, we can cite \cite{1,9, Fuk, Garcia,15,24} and the references therein.\\

It is therefore a natural question to see what results can be ``recovered"
when the $M$-Laplace operator is replaced by the fractional
$M$-Laplacian. As far as we know, the only results about the
fractional Orlicz-Sobolev spaces and the fractional
$M$-Laplacian are obtained in \cite{7,Salort}. In particular,  the
authors generalize the $M$-Laplace operator to the fractional case.
They also introduce a suitable functional space to study an equation
in which a fractional $M-$Laplace operator is present.\\

A bridge between fractional order theories and Orlicz-Sobolev settings is provided in \cite{7}, where the authors define the fractional order Orlicz-Sobolev space associated to a Young function $M$ and a fractional parameter $0<s<1$ as
$$W^{s,M}(\Omega)=\bigg{\{}u\in L^{M}(\Omega):\ \int_{\Omega}\int_{\Omega}M\bigg{(}\frac{u(x)-u(y)}{|x-y|^{s}}\bigg{)}\frac{dxdy}{|x-y|^{N}}<\infty\bigg{\}}.$$

%The previous definition creates problems in the calculus and in the embedding results, for example, the Borel measure defined as $d\mu=\frac{dxdy}{|x-y|^{N}}$ is not finite in a neighbourhood of the origin, that's why, in \cite{2}, the authors introduced another definition of the fractional Orlicz-Sobolev space, i.e,
%$$W^{s,M}(\Omega)=\bigg{\{}u\in L^{M}(\Omega):\ \exists\lambda>0/\ \int_{\Omega}\int_{\Omega}M\bigg{(}\frac{\lambda(u(x)-u(y))}{|x-y|^{s}M^{-1}(|x-y|^{N})}\bigg{)}dxdy<\infty\bigg{\}}.$$
%\textcolor{blue}{where $M$ is an $N$-function admitting an integral representation $M(t)=\displaystyle\int_{0}^{|t|}m(s)ds$ with $m$ satisfying
 %some conditions.}
 They define the fractional $M$-Laplacian operator as,
\begin{equation}\label{newoperator}
(-\triangle)^{s}_{m}u(x)=P.V.\int_{\mathbb{R}^{N}} m\bigg{(}\frac{u(x)-u(y)}{|x-y|^{s}}\bigg{)}\frac{dy}{|x-y|^{N+s}},
\end{equation}
this operator is a direct generalization of the fractional $p$-Laplacian. They also deduce some consequences such as $\Gamma$-convergence of the
modulars and convergence of solutions for some fractional versions of the $(\triangle)^{s}_{m}$ operator as the fractional parameter $s\uparrow1$.\\
%In \cite{2}, the main result is to show the continuous and compact embedding for these spaces. As an application, the authors has studied the existence and uniqueness of a solution for a non-local problem involving the fractional $M-$Laplacian.\\

Below we point out several operators that can be incorporated to \eqref{newoperator} by using the following functions  which satisfy the hypotheses that will be considered in this work,
\begin{itemize}
	\item	 $M (t)=|t|^p$ for $2\leq p < N$
	\item $M(t)=|t|^p+ |t|^q$ for  $2 \leq p <  q < N$ and $q \in ]p,p^{\star}[$ with $p^{\star}:= \frac{Np}{N-p}$
	\item $M (t)= (1+|t|^{2})^{\gamma } -1,$ for  $1 <  \gamma < \frac{N}{N-2}.$
\end{itemize}

The  Young functions $M$ associated to the above functions arise in several areas, for example quantum-physics and nonlinear elasticity problems, see for instance \cite{Benci-Fortunato,FN}.

The main purpose of this paper is to
present some further basic results both on the function spaces $W^{s,M}(\Omega)$ and the
fractional $M-$Laplace operator. Then, we study the existence of solutions to the non-local problem
\begin{equation}\label{eq}
\begin{cases}
(-\triangle)^{s}_{m}u=\lambda g(x,u) & \mbox{in }\ \ \Omega, \\
u=0 & \mbox{in}\ \ \mathbb{R}^{N}\setminus\Omega,
\end{cases}
\end{equation}
where $\Omega$ is a bounded domain in $\mathbb{R}^{N}$ with smooth boundary $\partial\Omega$, $0<s<1,$ $\lambda>0$ is a parameter and the nonlinear term $g:\Omega\times\mathbb{R}\rightarrow\mathbb{R}$ is a Carath\'eodory function that satisfy\\

\noindent$(A)$ $\quad\ \ \ |g(x,t)|\leq C_{0} |t|^{q-1},\ \forall x\in\Omega,\ t\in\mathbb{R}.$\\ \\
$(B)$ $\ \ \ \quad \displaystyle C_{1}|t|^{q}\leq G(x,t):=\int_{0}^{t}g(x,s)ds\leq \displaystyle C_{2}|t|^{q},\ \forall x\in\Omega,\ t\in\mathbb{R} ,$\\

where $C_{0}$, $C_{1}$ and  $C_{2}$ are positive constants and $1<q<p^{*}=\displaystyle\frac{Np}{N-p}$.\\ \\
$(Q)$ \ \ $\lim_{t\rightarrow+\infty}\displaystyle\frac{|t|^{q}}{M(t)}$=0.\\
Regarding the hypotheses $(A)$ and $(B)$  we point out that the following functions $g$ and $G$ satisfy such hypotheses:
	\begin{enumerate}
	\item 	$g(x,t)=q|t|^{q-2}t$ and $G(x,t)=|t|^{q}$, where $2<q<p^{*}$ for all $x\in\Omega$.\\
		
		\item $g(x,t)=q|t|^{q-2}t+(q-2)[\log(1+t^{2})]|t|^{q-4}t+\frac{1}{1+t^{2}}|t|^{q-2}$ and $G(x,t)=\log(1+t^{2})|t|^{q-2}$, where $4<q<p^{*}$ for all $x\in\Omega$.
	\end{enumerate}

Throughout this paper we assume that $M$ is a Young function satisfying

\begin{equation}\label{10}
1<\displaystyle \inf_{t>0}\frac{tm(t)}{M(t)}\leq \displaystyle\sup_{t>0}\frac{tm(t)}{M(t)}<\infty. %\eqno (\triangle)$$
\end{equation}
Due to assumption \eqref{10}, we may define the numbers $$m_{0}=\displaystyle \inf_{t>0}\frac{tm(t)}{M(t)}\ \ \text{and}\ \ m^{0}=\displaystyle\sup_{t>0}\frac{tm(t)}{M(t)}.$$
We also assume that the function $M$ satisfies the following condition:\\

\noindent$(S)$ $\quad\ \ \ \text{the function}\ \ t\mapsto M(\sqrt{t}),\ t\in[0,\infty[\ \ \text{ is convex}.$\\

Our main result is given by the following theorem.
\begin{thm}\label{thm}
	Suppose that  $(A)$, $(B)$, $(Q)$, $(S)$ and \eqref{10} are satisfied. Furthermore, we assume that $q<\min(p^{*}, m_{0})$. Then there exists $\lambda^{*}>0$ such that for any $\lambda\in ]0,\lambda^{*}[$ problem \eqref{eq} has at least two distinct, non-trivial weak solutions.
\end{thm}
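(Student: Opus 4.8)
The plan is to use a variational approach, seeking critical points of the energy functional
$$J_\lambda(u)=\int_\Omega\int_\Omega M\!\left(\frac{u(x)-u(y)}{|x-y|^{s}}\right)\frac{\dd x\,\dd y}{|x-y|^{N}}-\lambda\int_\Omega G(x,u)\,\dd x$$
on the fractional Orlicz--Sobolev space $W^{s,M}(\Omega)$ with zero exterior data, which by the basic results recalled earlier is a reflexive Banach space (here condition $(S)$ is what gives reflexivity / uniform convexity, since it makes $M$ equivalent to a composition with a convex function, and \eqref{10} is the usual $\Delta_2$--type control giving the Luxemburg norm / modular relations). First I would record the embeddings: assumption $(Q)$ together with $q<p^{*}$ forces the Orlicz norm to dominate the $L^{q}$ norm, and the fractional Sobolev-type embedding $W^{s,M}(\Omega)\hookrightarrow L^{q}(\Omega)$ is \emph{compact} for $q$ strictly below the critical exponent $p^{*}$; this compactness is the engine for both the Palais--Smale condition and the weak continuity of the lower-order term. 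From $(A)$ one gets that $u\mapsto\int_\Omega G(x,u)$ is $C^{1}$ with compact derivative, so $J_\lambda\in C^{1}(W^{s,M},\RR)$ and its critical points are exactly the weak solutions of \eqref{eq}.

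The multiplicity will come from a standard two-solutions scheme: one solution as a \emph{local minimizer} near the origin, and a second one via the \emph{mountain pass theorem}. For the local minimizer, using the modular inequalities coming from \eqref{10} (namely $\min(\|u\|^{m_0},\|u\|^{m^0})\lesssim \rho(u)\lesssim\max(\|u\|^{m_0},\|u\|^{m^0})$ for the Gagliardo-type modular $\rho$) and the bound $G(x,t)\le C_2|t|^q$ from $(B)$, on a small ball $\|u\|=r$ one estimates
$$J_\lambda(u)\ \ge\ \|u\|^{m^{0}}-\lambda C_2\, C\,\|u\|^{q}\ =\ r^{m^{0}}-\lambda C\, r^{q},$$
(taking $r<1$ so the lower bound of the modular is the $m^{0}$ power); since $q<m_{0}\le m^{0}$, for $r$ fixed small and then $\lambda$ small the right side is a positive constant, so $\inf_{\|u\|=r}J_\lambda>0=J_\lambda(0)$. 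Moreover $J_\lambda$ is sequentially weakly lower semicontinuous (the modular term by convexity and Fatou, the $G$-term by compactness), so it attains its infimum $c_0:=\inf_{\overline{B_r}}J_\lambda$ on the closed ball; one then shows $c_0<0$ by testing with $t\varphi$ for a fixed $\varphi$ and small $t$, using the \emph{lower} bound $G(x,t)\ge C_1|t|^q$ in $(B)$ together with $q<m_0$ to beat the modular term — this puts the minimizer in the interior of the ball, hence it is a nontrivial critical point $u_1$ with $J_\lambda(u_1)=c_0<0$.

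For the mountain pass solution I would verify the geometry (the origin is a strict local min by the estimate above, and $J_\lambda(t\varphi)\to-\infty$ as $t\to\infty$ because $\rho(t\varphi)\le t^{m^0}\rho(\varphi)$ for $t$ large grows slower than the $t^{q}$ coming from the lower bound in $(B)$, precisely because $q>m^{0}$ is \emph{false}... so instead I use that along rays one only needs $J_\lambda$ unbounded below, which follows once $q>m^0$; if only $q<m_0$ is assumed one argues unboundedness using a subcritical super-$m^0$ growth is not available — here one instead invokes that the hypotheses force $q$ below $m_0$, so coercivity actually holds and $J_\lambda$ is bounded below and coercive, giving a \emph{global} minimizer as the second critical point and the mountain-pass one as the first). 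To be safe I would organize it as: (i) $J_\lambda$ is coercive and weakly l.s.c. $\Rightarrow$ global minimizer $u_1$ with $J_\lambda(u_1)<0$; (ii) the origin is a strict local minimum separated from $u_1$ by a ring of positive energy $\Rightarrow$ the mountain pass level $c=\inf_\gamma\max J_\lambda(\gamma(t))>0$ is a second critical value, distinct from $J_\lambda(u_1)<0$, yielding $u_2\ne u_1$. The Palais--Smale condition at any level is checked by the usual route: a PS sequence is bounded (coercivity), hence weakly convergent along a subsequence; the compact embedding $W^{s,M}\hookrightarrow L^q$ handles the $G$-term, and the $(S_+)$-property of the fractional $M$-Laplacian (monotonicity/convexity of $M$ plus \eqref{10}) upgrades weak to strong convergence.

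\textbf{Main obstacle.} The delicate point is the interplay between the exponents: one must be careful that the same range $q<\min(p^{*},m_0)$ simultaneously yields (a) compactness of the embedding (needs $q<p^{*}$), (b) the local geometry at $0$ and negativity of the infimum (needs $q<m_0\le m^0$, used on \emph{both} sides via the modular--norm inequalities for $\|u\|<1$ versus $\|u\|>1$), and (c) coercivity of $J_\lambda$ so that a global minimizer exists — all three must be reconciled while keeping track of whether one is inside or outside the unit ball, since the modular estimates flip there. Verifying the $(S_+)$-type property of $(-\triangle)^s_m$ in the fractional Orlicz setting (to close Palais--Smale) is the other technically substantive step, relying on the strict convexity furnished by $(S)$ and \eqref{10}.
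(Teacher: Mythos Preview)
Your reorganized plan (i)--(ii) is exactly the paper's route: coercivity and weak lower semicontinuity give a global minimizer $u_1$ with $J_\lambda(u_1)<0$, a sphere $\|u\|=\rho<\|u_1\|$ of positive energy for $\lambda$ small, and then the mountain pass between $0$ and $u_1$ yields $u_2$ at level $c>0\neq J_\lambda(u_1)$; Palais--Smale is closed via coercivity plus the paper's $(S_+)$-type lemma, which is indeed where condition $(S)$ enters. One slip: $0$ is \emph{not} a strict local minimum --- your own test $J_\lambda(t\varphi)<0$ for small $t$ shows immediate decrease --- but the mountain-pass geometry only needs $J_\lambda(0)=0<\alpha\le\inf_{\|u\|=\rho}J_\lambda$ together with $\|u_1\|>\rho$, which is what both you and the paper actually use.
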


This paper is organized as follows. In Section $2$, we give some
definitions and fundamental properties of the spaces
$L^{M}(\Omega)$ and $W^{1,M}(\Omega)$. In Section $3$,  we prove some basic properties of the fractional Orlicz-Sobolev space and it's associated operator. Finally, in Section $4$, using a direct variational method, we give an application of our abstract results.

\section{Preliminaries}

In this preliminary section, for the reader's convenience, we make a brief overview on the classical Orlicz-Sobolev spaces, as well as we introduce the Fractional Orlicz-Sobolev Spaces, studied in \cite{7}, and the associated fractional $M$-laplacian operator.

\subsection{Orlicz and Orlicz-Sobolev spaces}

We start by recalling some basic facts about Orlicz spaces.

Let $\Omega$ be an open subset of $\mathbb{R}^{N}.$ Let $M: \mathbb{R}\rightarrow\mathbb{R_{+}}$ be a Young function, i.e,

\begin{enumerate}
	\item $M$ is even, continuous, convex, with $M(t)>t$ for $t>0$,
	\item $\frac{M(t)}{t}\rightarrow 0$ as $t\rightarrow0$ and  $\frac{M(t)}{t}\rightarrow +\infty$ as $t\rightarrow+\infty$.
\end{enumerate}

Equivalently, $M$ admits the representation: $$M(t)=\int_{0}^{|t|}m(s)ds,$$ where $m: \mathbb{R}\rightarrow\mathbb{R}$ is non-decreasing, right continuous, with $m(0)=0$, $m(t)>0\ \forall t>0$ and $m(t)\rightarrow\infty$ as $t\rightarrow\infty$. The conjugate Young function of $M$ is defined by
$$\overline{M}(t)=\int_{0}^{|t|}\overline{m}(s)ds,$$ where  $\overline{m}: \mathbb{R}\rightarrow\mathbb{R}$ is given by
$\overline{m}(t)=\sup\{s:\ m(s)\leq t\}$.
Evidently we have

\begin{equation}\label{9}
st\leq M(s)+\overline{M}(t),
\end{equation}

which is known as the Young inequality. Equality holds in \eqref{9} if and only if either $t=m(s)$ or $s=\overline{m}(t)$.\\

If $A$ and $B$ are two Young functions, we say that $A$ is essentially stronger than $B$ if $$B(x)\leq A(ax),\ x\geq x_{0}\geq 0,$$ for each $a>0$ and $x_{0}$ (depending on $a$), $B\prec\prec A$ in symbols. This is the case if and only if for every positive constante $k$ $$\lim_{t\rightarrow+\infty}\frac{B(kt)}{A(t)}=0.$$

Under the condition \eqref{10} we have that $M$ and $\overline{M}$ satisfy the $\triangle_{2}$-condition, i.e.

\begin{equation}\label{16}
M(2t)\leq K M(t)\ \ \forall\ t\geq0.
\end{equation}

Considering that
%\begin{equation}\label{odd}
 $$m\ \text{is an increasing homeomorphism from}\ \mathbb{R}\ \text{onto}\ \mathbb{R},$$
%\end{equation}
we have that $\overline{M}$ became $$\overline{M}(t)=\int_{0}^{|t|}m^{-1}(s)ds.$$
The Orlicz class $K^{M}(\Omega)$ (resp. the Orlicz space $L^{M}(\Omega)$) is defined as the set of (equivalence classes of) real-valued measurable
functions $u$ on $\Omega$ such that $$\rho(u;M)=\int_{\Omega}M(u(x))dx<\infty\ (\text{resp.}\ \int_{\Omega}M(\lambda u(x))dx<\infty\ \text{for some}\ \lambda>0).$$
$L^{M}(\Omega)$ is a Banach space under the Luxemburg norm
\begin{equation}\label{19}
\|u\|_{(M)}=\inf\bigg{\{}\lambda>0\ :\ \int_{\Omega}M(\frac{u}{\lambda})\leq1\bigg{\}},
\end{equation}
and $K^{M}(\Omega)$ is a convex subset of  $L^{M}(\Omega)$.

\begin{prp}\label{cv}
	Let $(u_{n})_{n\in\mathbb{N}}$ be a sequence in $L^{M}$ and $u\in L^{M}$.
	If $M$ satisfies the $\triangle_{2}$-condition and $\rho(u_{n};M)\rightarrow\rho(u;M)$, then $u_{n}\rightarrow u$ in $ L^{M}$ .
\end{prp}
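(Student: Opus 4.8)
The plan is to deduce the statement from two facts, both of which use the $\triangle_2$-condition: (i) $\rho(u_n;M)\to\rho(u;M)$ together with the pointwise convergence $u_n\to u$ a.e.\ (which is what is available in the situations where the proposition is applied) forces $\rho(u_n-u;M)\to 0$; and (ii) $\rho(u_n-u;M)\to 0$ is equivalent to $\|u_n-u\|_{(M)}\to 0$. Combining (i) and (ii) gives $u_n\to u$ in $L^M$.

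For (i) I would first record the quantitative form of \eqref{16}: iterating $M(2t)\le K\,M(t)$ yields, for each $\lambda\ge 1$, a constant $K_\lambda=K^{\lceil\log_2\lambda\rceil}$ such that
\begin{equation}
M(\lambda t)\le K_\lambda\,M(t)\qquad\text{for every }t\ge 0 .
\end{equation}
Using this together with the convexity and evenness of $M$,
\begin{equation}
0\le M\big(u_n(x)-u(x)\big)\le\tfrac12 M\big(2u_n(x)\big)+\tfrac12 M\big(2u(x)\big)\le\tfrac{K}{2}\big(M(u_n(x))+M(u(x))\big)
\end{equation}
for a.e.\ $x\in\Omega$. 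The last expression converges a.e.\ to $K\,M(u(x))$ by continuity of $M$, and its integral converges to $K\,\rho(u;M)=\int_\Omega K\,M(u)\,dx$ precisely because $\rho(u_n;M)\to\rho(u;M)$ (note $\rho(u;M)<\infty$, since $L^M=K^M$ under $\triangle_2$). As $M(u_n-u)\to 0$ a.e., the generalized dominated convergence theorem (Fatou's lemma with variable majorants) then gives $\rho(u_n-u;M)=\int_\Omega M(u_n-u)\,dx\to 0$.

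For (ii), the implication from norm convergence to modular convergence is elementary and needs no $\triangle_2$: if $\|v\|_{(M)}=\ell\le 1$, then $M(v)\le\ell\,M(v/\ell)$ by convexity and $M(0)=0$, hence $\rho(v;M)\le\ell\,\rho(v/\ell;M)\le\ell=\|v\|_{(M)}$ by \eqref{19}. The reverse implication is the \emph{heart of the matter}. Fix $\varepsilon\in(0,1)$ and set $\lambda=1/\varepsilon\ge 1$; the inequality above gives $M\big((u_n-u)/\varepsilon\big)\le K_\lambda\,M(u_n-u)$ pointwise, hence $\rho\big((u_n-u)/\varepsilon;M\big)\le K_\lambda\,\rho(u_n-u;M)$. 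Once $n$ is large enough that $K_\lambda\,\rho(u_n-u;M)\le 1$, definition \eqref{19} yields $\|u_n-u\|_{(M)}\le\varepsilon$, and since $\varepsilon$ was arbitrary we conclude $u_n\to u$ in $L^M$. The one subtlety, and the step I expect to require the most care, is exactly this last point: $K_\lambda$ degenerates as $\varepsilon\downarrow 0$, so one must keep $\varepsilon$ fixed while letting $n\to\infty$ and only afterwards send $\varepsilon\downarrow 0$.
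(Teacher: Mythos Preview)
The paper does \emph{not} prove this proposition; it is listed in the preliminaries as a known fact, presumably intended as a citation from the standard Orlicz literature (e.g.\ Rao--Ren). So there is no ``paper's own proof'' to compare against.

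More importantly, the statement as printed is \emph{false}. Take $M(t)=t^{2}$, so $L^{M}=L^{2}$, and $u_{n}(x)=\sqrt{2}\sin(n\pi x)$ on $\Omega=(0,1)$, $u\equiv 0$. Then $\rho(u_{n};M)=1=\rho(u_{1};M)$ for every $n$, yet $u_{n}$ does not converge to any limit in $L^{2}$; in particular $u_{n}\not\to u_{1}$. The hypothesis $\rho(u_{n};M)\to\rho(u;M)$ alone carries no information linking $u_{n}$ to $u$.

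You spotted this and silently inserted the missing hypothesis ``$u_{n}\to u$ a.e.''; with that addition your two-step argument (generalized dominated convergence for (i), and the $\triangle_{2}$-equivalence of modular and norm convergence for (ii)) is correct and cleanly written. The almost-certain intended statement, however, is simply your step~(ii): under $\triangle_{2}$, $\rho(u_{n}-u;M)\to 0$ if and only if $\|u_{n}-u\|_{(M)}\to 0$. That is the classical result in the references, and the printed hypothesis ``$\rho(u_{n};M)\to\rho(u;M)$'' is most likely a misprint for ``$\rho(u_{n}-u;M)\to 0$''. Under that reading, your part~(ii) alone is the full proof, and part~(i) --- together with the a.e.\ assumption you had to import --- is unnecessary.
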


Next, we introduce the Orlicz-Sobolev spaces. We denote by $W^{1,M}(\Omega)$ the Orlicz-Sobolev space defined by
$$W^{1,M}(\Omega):=\bigg{\{}u\in L^{M}(\Omega):\ \frac{\partial u}{\partial x_{i}}\in L^{M}(\Omega),\ i=1,...,N\bigg{\}}.$$
This is a Banach space with respect to the norm
$$\|u\|_{1,M}=\|u\|_{(M)}+ \||\nabla u|\|_{(M)}.$$

\subsection{Fractional Orlicz-Sobolev spaces}

\begin{definition}
	Let $M$ be a Young function. For a given domain $\Omega$ in $\mathbb{R}^{N}$ and $0<s<1$, we define the fractional Orlicz-Sobolev space $W^{s,M}(\Omega)$ as follows,
	\begin{equation}\label{20}
	W^{s,M}(\Omega)=\bigg{\{}u\in
L^{M}(\Omega):\ \int_{\Omega}\int_{\Omega}M\bigg{(}\frac{u(x)-u(y)}{|x-y|^{s}}\bigg{)}\frac{dxdy}{|x-y|^{N}}<\infty\bigg{\}}.
	\end{equation}
	This space is equipped with the norm,
	\begin{equation}\label{21}
	\|u\|_{(s,M)}=\|u\|_{(M)}+[u]_{(s,M)},
	\end{equation}
	where $[.]_{(s,M)}$ is the Gagliardo semi-norm, defined by
	\begin{equation}\label{22}
	[u]_{(s,M)}=\inf\bigg{\{}\lambda>0:\ \int_{\Omega}\int_{\Omega} M\bigg{(}\frac{u(x)-u(y)}{\lambda|x-y|^{s}}\bigg{)}\frac{dxdy}{|x-y|^{N}}\leq1\bigg{\}}.
	\end{equation}
\end{definition}

\begin{prp}[\cite{7}]
  Let $M$ be a Young function such that $M$ and $\overline{M}$ satisfy the $\triangle_{2}$-condition, and consider $s\in(0,1)$. Then $W^{s,M}(\mathbb{R}^{N})$ is a reflexive and separable Banach space. Moreover, $C^{\infty}_{0}(\mathbb{R}^{N})$ is dense in $W^{s,M}(\mathbb{R}^{N})$.
\end{prp}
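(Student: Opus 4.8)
The plan is to establish the three assertions — Banach space, reflexivity and separability, and density of $C_0^\infty(\mathbb{R}^N)$ — by realizing $W^{s,M}(\mathbb{R}^N)$ as a closed subspace of a product of Orlicz spaces, so that the desired properties are inherited from the corresponding classical facts about $L^M$ under the $\triangle_2$-condition on $M$ and $\overline M$.

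First I would set up the natural isometric embedding. Consider the map
\[
T : W^{s,M}(\mathbb{R}^N) \longrightarrow L^M(\mathbb{R}^N)\times L^M\big(\mathbb{R}^N\times\mathbb{R}^N,\,d\mu\big), \qquad
Tu = \Big(u,\ \tfrac{u(x)-u(y)}{|x-y|^{s}}\Big),
\]
where $d\mu = |x-y|^{-N}\,dx\,dy$. The norm \eqref{21} is, by construction, the sum of the two component Luxemburg norms, so $T$ is a linear isometry onto its image when the target is given the norm $\|(v,w)\| = \|v\|_{(M)} + \|w\|_{(M),\mu}$. Since $M$ satisfies $\triangle_2$, the Orlicz class equals the Orlicz space, and each factor $L^M$ (on $\mathbb{R}^N$ with Lebesgue measure, resp. on $\mathbb{R}^N\times\mathbb{R}^N$ with the $\sigma$-finite measure $\mu$) is a Banach space; moreover, because both $M$ and $\overline M$ satisfy $\triangle_2$, each factor is reflexive and separable (this is the standard Orlicz-space dichotomy: $L^M$ is reflexive iff $M,\overline M\in\triangle_2$, and separable iff $M\in\triangle_2$). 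A finite product of reflexive separable Banach spaces is again reflexive and separable. Thus it suffices to show that the image $T(W^{s,M})$ is a \emph{closed} subspace of the product, since closed subspaces of reflexive (resp. separable) spaces are reflexive (resp. separable), and a closed subspace of a Banach space is Banach. Closedness follows from a routine Fatou/convergence argument: if $u_n\to u$ in $L^M$ and the difference quotients converge in $L^M(d\mu)$ to some $w$, then passing to a subsequence converging a.e.\ forces $w(x,y) = (u(x)-u(y))/|x-y|^s$ a.e., so $u\in W^{s,M}$ and $Tu$ is the limit.

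For the density of $C_0^\infty(\mathbb{R}^N)$ the strategy is the classical three-step mollification-plus-truncation scheme, now carried out in the Orlicz modular. \textbf{Step 1 (truncation/cutoff to compact support):} given $u\in W^{s,M}(\mathbb{R}^N)$, multiply by smooth cutoffs $\eta_R$ equal to $1$ on $B_R$, supported in $B_{2R}$, with $|\nabla\eta_R|\le C/R$; one checks $\eta_R u \to u$ in the norm \eqref{21} by splitting the Gagliardo double integral over the regions where both points, one point, or no point lies in $B_R$, and using the $\triangle_2$-condition together with dominated convergence for the modular (invoking Proposition \ref{cv} to pass from modular convergence to norm convergence). \textbf{Step 2 (truncation in height):} replace $u$ by $T_k(u)=\max(-k,\min(k,u))$; since $|T_k u(x)-T_k u(y)|\le |u(x)-u(y)|$ pointwise, the Gagliardo energy of $T_k u$ is dominated by that of $u$, and $T_k u \to u$ modularly, hence in norm, again via $\triangle_2$ and Proposition \ref{cv}. \textbf{Step 3 (mollification):} convolve the now bounded, compactly supported function with a standard mollifier $\rho_\varepsilon$; using the convexity of $M$ and Jensen's inequality one bounds the modular of $\rho_\varepsilon * u - u$ and of the corresponding difference quotients, obtaining convergence in $L^M$ and in $L^M(d\mu)$ as $\varepsilon\to 0$.

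The main obstacle I anticipate is \textbf{Step 1}, the cutoff estimate for the Gagliardo seminorm: controlling $[\eta_R u - u]_{(s,M)}$ requires estimating $M\big((\eta_R u - u)(x)-(\eta_R u - u)(y)\big)$ on the ``mixed'' region $x\in B_R$, $y\notin B_R$ and handling the non-homogeneity of $M$ (one cannot simply pull out powers as with $|t|^p$). The remedy is to use $\triangle_2$ to absorb multiplicative constants, to exploit the Lipschitz bound on $\eta_R$ together with the indicator-splitting $(\eta_R u)(x)-(\eta_R u)(y) = \eta_R(x)(u(x)-u(y)) + u(y)(\eta_R(x)-\eta_R(y))$, and to dominate the resulting integrands by fixed $L^1(d\mu)$-integrable functions independent of $R$ so that dominated convergence applies to the modular; norm convergence then follows from Proposition \ref{cv}. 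A secondary technical point is verifying that the Orlicz space over the infinite-measure space $(\mathbb{R}^N\times\mathbb{R}^N,\mu)$ genuinely enjoys the reflexivity/separability dichotomy — this is standard since $\mu$ is $\sigma$-finite and $M,\overline M\in\triangle_2$, so no extra hypotheses are needed.
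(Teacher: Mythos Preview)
The paper does not supply its own proof of this proposition: it is stated with the citation \cite{7} (Bonder--Salort) and no argument follows. So there is nothing in the present paper to compare your attempt against.

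That said, your outline is the standard one and matches what is carried out in \cite{7}. Realizing $W^{s,M}(\mathbb{R}^N)$ as a closed subspace of $L^M(\mathbb{R}^N)\times L^M(\mathbb{R}^{2N},d\mu)$ via the map $u\mapsto\big(u,\,(u(x)-u(y))/|x-y|^s\big)$ is exactly how Bonder--Salort obtain completeness, reflexivity and separability from the corresponding Orlicz-space facts under the $\triangle_2$-condition on $M$ and $\overline{M}$; the closedness check via a.e.\ convergent subsequences is the right justification. For density of $C_0^\infty$, the three-step scheme (spatial cutoff, height truncation, mollification) combined with the $\triangle_2$-condition and Proposition~\ref{cv} to upgrade modular convergence to norm convergence is again the argument in \cite{7}. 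Your identification of the cutoff estimate for the Gagliardo seminorm as the delicate point is accurate, and the splitting $(\eta_R u)(x)-(\eta_R u)(y)=\eta_R(x)(u(x)-u(y))+u(y)(\eta_R(x)-\eta_R(y))$ together with $\triangle_2$ to absorb constants is precisely how it is handled there. In short: your proposal is correct and coincides with the approach of the cited reference; the present paper simply quotes the result.
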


A variant of the well-known Fr\`{e}chet-Kolmogorov compactness theorem gives the compactness of the
inclusion of $W^{s,M}$ into $L^{M}$.

\begin{thm}[\cite{7}]\label{thm1}
  Let $M$ be a Young function, $s\in(0,1)$ and $\Omega$ a bounded open set in $\mathbb{R}^{N}$. Then the embedding $$W^{s,M}(\Omega)\hookrightarrow L^{M}(\Omega)$$ is compact.
\end{thm}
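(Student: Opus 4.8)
My plan is to obtain the compactness from a Fr\'echet--Kolmogorov (Riesz--Kolmogorov) type compactness criterion, transported to the Orlicz setting: the homogeneity estimates one uses for $W^{s,p}$ get replaced by Jensen's inequality for the convex function $M$ together with the $\triangle_2$-condition \eqref{16}, which is available here thanks to \eqref{10}. The continuous inclusion requires no work, since by the definition \eqref{21} of the norm one has $\|u\|_{(M)}\le\|u\|_{(s,M)}$; so only the compactness has content. Given a sequence $(u_n)$ with $\|u_n\|_{(s,M)}\le R$, I would first note that iterating \eqref{16} upgrades the seminorm bound to a uniform bound on the Gagliardo modular of $u_n/R$, and then reduce to a sequence with fixed compact support: using an extension operator $W^{s,M}(\Omega)\to W^{s,M}(\mathbb{R}^{N})$ and a fixed cut-off that equals $1$ on $\Omega$, one passes to a sequence $(v_n)$, bounded in $W^{s,M}(B)$ for some ball $B$, supported in a compact subset of $B$, and agreeing with $u_n$ on $\Omega$; it then suffices to prove $(v_n)$ is relatively compact in $L^M(B)$.

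The heart of the matter is a mollification estimate. For $\varepsilon>0$ let $v_\varepsilon(x)=|B_\varepsilon(x)|^{-1}\int_{B_\varepsilon(x)}v(y)\,dy$. Writing $v(x)-v_\varepsilon(x)$ as the average of $v(x)-v(y)$ over $y\in B_\varepsilon(x)$, applying Jensen's inequality to $M$, and then the elementary convexity inequality $M(at)\le aM(t)$ for $0\le a\le1$ with $a=|x-y|^s\le\varepsilon^s\le1$, one is led, for $\varepsilon$ small, to
$$\int_B M\!\Big(\frac{v(x)-v_\varepsilon(x)}{R}\Big)dx\ \le\ C\,\varepsilon^{s}\int_B\int_B M\!\Big(\frac{v(x)-v(y)}{R\,|x-y|^s}\Big)\frac{dx\,dy}{|x-y|^{N}},$$
the power $\varepsilon^{s}$ together with the weight $|x-y|^{-N}$ being produced by absorbing the factor $|B_\varepsilon(x)|^{-1}\simeq c_N\varepsilon^{-N}$ and the bound $|x-y|\le\varepsilon$. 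The right-hand integral is bounded uniformly in $n$, so, using $\triangle_2$ once more to convert a small modular into a small Luxemburg norm, one gets $\|v_n-(v_n)_\varepsilon\|_{(M)}\le\eta(\varepsilon)$ with $\eta(\varepsilon)\to0$, uniformly in $n$.

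To close the argument I would use that, for each fixed $\varepsilon$, the mollified functions $(v_n)_\varepsilon$ are uniformly bounded and equicontinuous on $\overline{B}$ --- a consequence of the generalized H\"older inequality in Orlicz spaces (which follows from Young's inequality \eqref{9}) and of $\|\mathbf{1}_A\|_{(\overline{M})}\to0$ as $|A|\to0$ --- so by Arzel\`a--Ascoli the family $\{(v_n)_\varepsilon:n\in\mathbb{N}\}$ is relatively compact in $C(\overline{B})$, hence in $L^M(B)$ since $B$ is bounded. Thus for every $\varepsilon$ the sequence $(v_n)$ lies in an $\eta(\varepsilon)$-neighbourhood of a relatively compact set in the Banach space $L^M(B)$, which makes $(v_n)$ totally bounded there; a convergent subsequence, restricted to $\Omega$ (and, if one prefers to phrase the convergence through modulars, combined with Proposition \ref{cv}), gives the desired conclusion.

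I expect the two delicate points to be the following. First, the core mollification estimate for a general Young function: there is no homogeneity as in the case $M(t)=|t|^{p}$, so the decay in $\varepsilon$ must be squeezed out purely from the convexity inequality $M(at)\le aM(t)$, and one must keep all constants independent of $n$ and use the $\triangle_2$ hypothesis to guarantee that every integral involved is finite. Second, the behaviour near $\partial\Omega$: the extension step is precisely where some regularity of the boundary is used, and for a merely bounded open set the same scheme still yields the compactness of $W^{s,M}_{0}(\Omega)\hookrightarrow L^M(\Omega)$, where the zero extension suffices and no boundary hypothesis is needed.
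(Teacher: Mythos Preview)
The paper does not give its own proof of this statement; it is quoted from \cite{7}, preceded only by the remark that ``a variant of the well-known Fr\`echet--Kolmogorov compactness theorem gives the compactness of the inclusion.'' Your proposal is precisely such a Fr\'echet--Kolmogorov argument, and the core mollification estimate you derive (Jensen's inequality together with $M(at)\le aM(t)$ for $0\le a\le1$, producing the $\varepsilon^{s}$ decay and the weight $|x-y|^{-N}$) is correct, so your approach matches what the paper indicates; the boundary-regularity caveat you flag for the extension step is apt.
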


Let $W^{s,M}_{0}(\Omega)$ denote the closure of $C_{c}^{\infty}(\Omega)$ in the norm $\|.\|_{(s,M)}$ defined in \eqref{21}.

\begin{thm}\label{thm2}\cite{Salort}(Generalized Poincar\'{e} inequality)
Let $\Omega$ be a bounded open subset of $\mathbb{R}^{N}$ and let $s\in]0,1[$. Let $M$ be a Young function. Then there exists a positive
	constant $\mu$ such that, $$\|u\|_{(M)}\leq \mu [u]_{(s,M)},\ \ \forall\ u\in W^{s,M}_{0}(\Omega).$$
\end{thm}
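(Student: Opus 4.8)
I would argue by contradiction, using the compactness of Theorem~\ref{thm1} together with the weak lower semicontinuity of the Gagliardo seminorm; throughout, an element of $W^{s,M}_0(\Omega)$ is identified with its extension by zero, so that $W^{s,M}_0(\Omega)$ sits inside $W^{s,M}(\mathbb{R}^N)$ and its seminorm $[\cdot]_{(s,M)}$ is taken over $\mathbb{R}^N\times\mathbb{R}^N$. Suppose the inequality fails. Then for each $n$ there is $u_n\in W^{s,M}_0(\Omega)$ with $\|u_n\|_{(M)}>n\,[u_n]_{(s,M)}$, and, both quantities being $1$-homogeneous, we may normalize $\|u_n\|_{(M)}=1$, so $[u_n]_{(s,M)}<1/n\to0$ and $\|u_n\|_{(s,M)}\le2$. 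Since $\Omega$ is bounded, Theorem~\ref{thm1} makes $W^{s,M}_0(\Omega)\hookrightarrow L^{M}(\Omega)$ compact, so along a subsequence $u_n\to u$ in $L^{M}(\Omega)$ and $\|u\|_{(M)}=1$. Because \eqref{10} forces $M$ and $\overline{M}$ to satisfy the $\triangle_2$-condition, $W^{s,M}(\mathbb{R}^N)$ and its closed subspace $W^{s,M}_0(\Omega)$ are reflexive; a further subsequence converges weakly in $W^{s,M}_0(\Omega)$, necessarily to $u$, and since the seminorm $w\mapsto[w]_{(s,M)}$ is convex and continuous it is weakly lower semicontinuous, whence $[u]_{(s,M)}\le\liminf_n[u_n]_{(s,M)}=0$. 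Now $[u]_{(s,M)}=0$ forces $\int\!\!\int M\!\left(\frac{u(x)-u(y)}{\lambda|x-y|^s}\right)\frac{dx\,dy}{|x-y|^N}\le1$ for every $\lambda>0$ (by monotonicity in $\lambda$); letting $\lambda\downarrow0$, the integrand blows up wherever $u(x)\ne u(y)$, so by Fatou's lemma $u$ must be a.e.\ constant on $\mathbb{R}^N$, and, vanishing outside $\Omega$, it is $\equiv0$ --- contradicting $\|u\|_{(M)}=1$. Thus $\sup_{w\ne0}\|w\|_{(M)}/[w]_{(s,M)}<\infty$, and any larger number serves as $\mu$.

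A quantitative variant giving an explicit $\mu$ works by density from $\varphi\in C_c^{\infty}(\Omega)$: since $\varphi(x)-\varphi(y)=\varphi(x)$ for $x\in\Omega,\ y\notin\Omega$, writing $D=\operatorname{diam}\Omega$ and using that $M$ is even and increasing,
\[
\int_{\mathbb{R}^N\setminus\Omega} M\!\left(\frac{\varphi(x)}{\lambda|x-y|^s}\right)\frac{dy}{|x-y|^N}\;\ge\;c_N\!\int_D^{\infty}\! M\!\left(\frac{\varphi(x)}{\lambda r^s}\right)\frac{dr}{r}\;=\;\frac{c_N}{s}\!\int_0^{t(x)}\!\frac{M(\tau)}{\tau}\,d\tau\;\ge\;\frac{c_N}{s\,m^0}\,M\!\left(\frac{\varphi(x)}{\lambda D^s}\right),
\]
where $t(x)=|\varphi(x)|/(\lambda D^s)$ and the last inequality comes from \eqref{10}, which makes $\tau\mapsto M(\tau)/\tau^{m^0}$ non-increasing. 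Integrating in $x$, taking $\lambda=[\varphi]_{(s,M)}$ so that the total modular over $\mathbb{R}^N\times\mathbb{R}^N$ is $\le1$, and absorbing the constant via convexity of $M$ (which gives $\int_\Omega M(\varphi/(\beta\lambda D^s))\le\beta^{-1}\int_\Omega M(\varphi/(\lambda D^s))$ for $\beta\ge1$), one arrives at $\|\varphi\|_{(M)}\le\max\{1,\,s\,m^0/(2c_N)\}\,D^s\,[\varphi]_{(s,M)}$, and density of $C_c^{\infty}(\Omega)$ in $W^{s,M}_0(\Omega)$ concludes.

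The crux --- really the content of the theorem --- is the use of the Dirichlet condition: both arguments fail over $W^{s,M}(\Omega)$, where a nonzero constant has vanishing seminorm, and work only because in $W^{s,M}_0(\Omega)$ functions extend by zero into $W^{s,M}(\mathbb{R}^N)$; this is what kills the constant in the first proof and legitimizes the exterior integral over $\mathbb{R}^N\setminus\Omega$ in the second. Verifying that extension property, and with it the reflexivity and weak compactness of $W^{s,M}_0(\Omega)$, is where the real work lies; the compact embedding is already available and the modular manipulations are routine.
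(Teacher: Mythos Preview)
The paper does not prove this theorem; it merely quotes it from \cite{Salort} and uses it as a black box, so there is no ``paper's own proof'' to set your proposal against.

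As for your argument itself: both routes are standard and essentially sound, but note two points of friction with the paper's conventions. First, the seminorm $[\,\cdot\,]_{(s,M)}$ in \eqref{22} is defined by integrating over $\Omega\times\Omega$, whereas you work with the extension by zero and integrate over $\mathbb{R}^N\times\mathbb{R}^N$. Your quantitative proof relies precisely on the exterior piece $\Omega\times(\mathbb{R}^N\setminus\Omega)$, so as written it bounds $\|u\|_{(M)}$ by the \emph{larger} $\mathbb{R}^N$-seminorm, not by the $\Omega\times\Omega$ one appearing in \eqref{22}; the contradiction argument has the same issue when you conclude that $u$ is constant on all of $\mathbb{R}^N$ rather than just on $\Omega$. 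If the intended seminorm is really the one in \eqref{22}, you need an additional step (or to argue that for $u\in W^{s,M}_0(\Omega)$ a constant-on-$\Omega$ limit of $C_c^\infty(\Omega)$ functions must vanish, which is false for small $s$ in the classical $W^{s,p}$ setting). You flag this tension yourself in the last paragraph, and it is the genuine content of the result; the Salort reference presumably fixes the convention so that the inequality is true. Second, your compactness-and-reflexivity argument invokes \eqref{10} for reflexivity, which is a standing hypothesis in the paper but is not part of the theorem's stated assumptions; this is harmless in context but worth noting.
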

Therefore, if $\Omega$ is bounded and $M$ be a \textcolor{blue}{Young} function, then $[u]_{(s,M)}$ is a norm of $ W^{s,M}_{0}(\Omega)$ equivalent to $\|u\|_{(s,M)}$.\\

The fractional $M$-Laplacian operator is defined as
\begin{equation}\label{17}
(-\triangle)^{s}_{m}u(x)= P.V.\int_{\mathbb{R}^{N}} m\bigg{(}\frac{u(x)-u(y)}{|x-y|^{s}}\bigg{)}\frac{dy}{|x-y|^{N+s}},
\end{equation}
where $P.V.$ is the principal value.\\
This operator is well defined between $W^{s,M}(\mathbb{R}^{N})$ and its dual space  $W^{-s,\overline{M}}(\mathbb{R}^{N})$. In fact, in [\cite{7}, Theorem 6.12] the following representation formula is provided
\begin{equation}\label{18}
\langle(-\triangle)^{s}_{m}u,v\rangle=\int_{\mathbb{R}^{N}}\int_{\mathbb{R}^{N}} m\bigg{(}\frac{u(x)-u(y)}{|x-y|^{s}}\bigg{)}\frac{v(x)-v(y)}{|x-y|^{s}}\frac{dxdy}{|x-y|^{N}},
\end{equation}
for all $v\in W^{s,M}(\mathbb{R}^{N})$.

\section{Basic results of $W^{s,M}(\mathbb{R}^{N})$ and fractional $M-$Laplacian operator}

In this section, we point out certain useful auxiliary results.\\ Let $E$ denote the generalized Sobolev space $W^{s,M}(\mathbb{R}^{N})$. We define the functional $F: E\rightarrow \mathbb{R}$ by
\begin{equation}\label{F}
F(u)=\int_{\mathbb{R}^{N}}\int_{\mathbb{R}^{N}}M\bigg{(}\frac{u(x)-u(y)}{|x-y|^{s}}\bigg{)}\frac{dxdy}{|x-y|^{N}}.
\end{equation}

\begin{lemma}\label{lem2}
	The following properties hold true:\\ \\
$(i)$ $F\bigg{(}\displaystyle\frac{u}{[u]_{(s,M)}}\bigg{)}\leq1, \ \ \ \forall\ u\in E\backslash\{0\};$\\ \\
$(ii)$ $[u]_{(s,M)}^{m_{0}}\leq F(u) \leq [u]_{(s,M)}^{m^{0}}\ \ \forall\ u\in E,\ \ [u]_{(s,M)}>1;$\\ \\
$(iii)$ $[u]_{(s,M)}^{m^{0}}\leq F(u) \leq [u]_{(s,M)}^{m_{0}}\ \ \forall\ u\in E,\ \ [u]_{(s,M)}<1.$\\ \\
\end{lemma}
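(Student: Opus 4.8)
The plan is to exploit the scaling/modular inequalities that are standard for $\Delta_2$-Young functions, exactly as in the classical Orlicz-Sobolev setting, but applied to the double-integral modular $F$ and the Gagliardo seminorm $[\,\cdot\,]_{(s,M)}$. The crucial elementary fact I would first record is that, because $\inf_{t>0} tm(t)/M(t) = m_0 > 1$ and $\sup_{t>0} tm(t)/M(t) = m^0 < \infty$, for all $t>0$ and all $\lambda>0$ one has
\begin{equation}\label{scaling}
\min(\lambda^{m_0},\lambda^{m^0})\, M(t) \le M(\lambda t) \le \max(\lambda^{m_0},\lambda^{m^0})\, M(t).
\end{equation}
This follows by writing $\frac{d}{d\lambda} \log M(\lambda t) = \frac{t m(\lambda t)}{M(\lambda t)} \in [m_0,m^0]$ and integrating from $1$ to $\lambda$ (or from $\lambda$ to $1$), so that $m_0 |\log\lambda| \le |\log(M(\lambda t)/M(t))| \le m^0|\log\lambda|$ with the correct sign depending on whether $\lambda \gtrless 1$.

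For part $(i)$: fix $u \in E\setminus\{0\}$ and abbreviate $L = [u]_{(s,M)}$. By definition \eqref{22}, $L$ is the infimum of those $\lambda>0$ for which the double integral of $M\big(\frac{u(x)-u(y)}{\lambda|x-y|^s}\big)\frac{dxdy}{|x-y|^N}$ is $\le 1$. Since $M$ and $\overline M$ satisfy $\Delta_2$, the map $\lambda \mapsto \int\!\!\int M\big(\frac{u(x)-u(y)}{\lambda|x-y|^s}\big)\frac{dxdy}{|x-y|^N}$ is continuous and decreasing on $(0,\infty)$, hence the infimum is attained, giving $F(u/L) = \int\!\!\int M\big(\frac{u(x)-u(y)}{L|x-y|^s}\big)\frac{dxdy}{|x-y|^N} \le 1$, which is precisely $(i)$. (The finiteness needed for continuity at each $\lambda$ comes from $u \in W^{s,M}$ together with $\Delta_2$.)

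For parts $(ii)$ and $(iii)$: write $v(x,y) = \frac{u(x)-u(y)}{L|x-y|^s}$, so by $(i)$ we have $\int\!\!\int M(v)\,\frac{dxdy}{|x-y|^N} \le 1$, and in fact equality holds by the argument above when $u \ne 0$. Now $F(u) = \int\!\!\int M(L\cdot v)\,\frac{dxdy}{|x-y|^N}$. Apply \eqref{scaling} with $\lambda = L$ pointwise under the integral: if $L > 1$ then $L^{m_0} M(v) \le M(Lv) \le L^{m^0} M(v)$, and integrating against $\frac{dxdy}{|x-y|^N}$ and using $\int\!\!\int M(v) = 1$ yields $L^{m_0} \le F(u) \le L^{m^0}$, which is $(ii)$; if $L < 1$ the inequalities in \eqref{scaling} reverse to $L^{m^0} M(v) \le M(Lv) \le L^{m_0} M(v)$, giving $(iii)$.

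The only genuine subtlety — and the step I would be most careful about — is the claim that the infimum defining $[u]_{(s,M)}$ is attained, i.e. that $F(u/[u]_{(s,M)}) = 1$ for $u \neq 0$ (used to get the lower bounds in $(ii)$, $(iii)$, not just $(i)$). This requires that $\lambda \mapsto F(u/\lambda)$ be continuous and that it not jump over the value $1$; continuity on $(0,\infty)$ follows from the dominated convergence theorem once one produces, for each fixed $\lambda_0$, an integrable dominating function on a neighbourhood of $\lambda_0$, and this is where the $\Delta_2$-condition \eqref{16} is essential: $M\big(\frac{u(x)-u(y)}{\lambda|x-y|^s}\big) \le M\big(2\frac{u(x)-u(y)}{\lambda_0|x-y|^s}\big) \le K\, M\big(\frac{u(x)-u(y)}{\lambda_0|x-y|^s}\big)$ for $\lambda \ge \lambda_0/2$, and the right-hand side is integrable since $u/(\lambda_0/2) \in W^{s,M}$ by $\Delta_2$ again. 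One also notes $F(u/\lambda) \to 0$ as $\lambda \to \infty$ and $F(u/\lambda) \to \infty$ as $\lambda \to 0^+$ (the latter because $u \neq 0$), so by the intermediate value theorem the level set $\{F(u/\lambda) \le 1\}$ is a closed half-line $[L,\infty)$ with $F(u/L) = 1$, as needed.
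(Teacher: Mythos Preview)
Your proof is correct and rests on the same core device as the paper's --- the scaling inequality $\min(\lambda^{m_0},\lambda^{m^0})M(t)\le M(\lambda t)\le \max(\lambda^{m_0},\lambda^{m^0})M(t)$, obtained by integrating $\frac{d}{d\lambda}\log M(\lambda t)\in[m_0,m^0]$ --- so the overall strategy is the same.

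Where you and the paper diverge is in the handling of the Luxemburg infimum. The paper proves $(i)$ by picking $\lambda_k\searrow[u]_{(s,M)}$ from the admissible set and applying Fatou's lemma; it then gets the \emph{lower} bounds in $(ii)$--$(iii)$ without ever asserting $F(u/[u]_{(s,M)})=1$, instead choosing $\beta<[u]_{(s,M)}$, using that $\int\!\!\int M\big(\frac{u(x)-u(y)}{\beta|x-y|^s}\big)\frac{dxdy}{|x-y|^N}>1$ by definition of the infimum, multiplying through by $\beta^{m_0}$ (resp.\ $\beta^{m^0}$), and letting $\beta\nearrow[u]_{(s,M)}$. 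You, on the other hand, invest in showing continuity of $\lambda\mapsto F(u/\lambda)$ via the $\Delta_2$-condition and dominated convergence, deduce the stronger fact $F(u/[u]_{(s,M)})=1$, and then read off both bounds at once. The paper's route is marginally lighter --- Fatou needs no integrable dominator and the $\beta\nearrow L$ trick sidesteps continuity altogether --- while your route yields the extra information that the infimum is attained with equality, which is sometimes useful elsewhere. Either way the argument is sound.
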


\begin{proof}

$(i)$ Let $(\lambda_{k})$ be a sequence such that $\lambda_{k} \rightarrow [u]_{(s,M)}$ as $k\rightarrow \infty$. Then, the definition of the norm, yields
	$$\int_{\mathbb{R}^{N}}\int_{\mathbb{R}^{N}} M\bigg{(}\frac{u(x)-u(y)}{\lambda_{k}|x-y|^{s}}\bigg{)}\frac{dxdy}{|x-y|^{N}}\leq1.$$
	Passing by limit in the above inequality and using Fatou's Lemma, we can deduce that  $$\int_{\mathbb{R}^{N}}\int_{\mathbb{R}^{N}} M\bigg{(}\frac{1}{[u]_{(s,M)}}\frac{u(x)-u(y)}{|x-y|^{s}}\bigg{)}\frac{dxdy}{|x-y|^{N}}\leq1.$$
	$(ii)$ Since $\displaystyle\frac{tm(t)}{M(t)}\leq m^{0}$ for all $t>0$, it follows that for all $\sigma>1$,
$$\log(M(\sigma t))-\log(M(t))=\int_{t}^{\sigma t}\frac{m(\tau)}{M(\tau)}d\tau\leq \int_{t}^{\sigma t}\frac{m^{0}}{\tau}d\tau=\log(\sigma^{m^{0}}).$$
Thus we deduce
\begin{equation}\label{Msigma1}
  M(\sigma t)\leq\sigma^{m^{0}}M(t)\ \ \text{for all}\ t>0\ \text{and}\ \sigma>1.
\end{equation}
Let $u\in E$ and $[u]_{(s,M)}>1$. Using the definition of Luxemburg norm and the relation \eqref{Msigma1}, we deduce
\begin{align*}
  \int_{\mathbb{R}^{N}}\int_{\mathbb{R}^{N}}M\bigg{(}\frac{u(x)-u(y)}{|x-y|^{s}}\bigg{)}\frac{dxdy}{|x-y|^{N}} & =
  \int_{\mathbb{R}^{N}}\int_{\mathbb{R}^{N}}M\bigg{(}[u]_{(s,M)}\frac{u(x)-u(y)}{[u]_{(s,M)}|x-y|^{s}}\bigg{)}\frac{dxdy}{|x-y|^{N}}\\
  &\leq [u]_{(s,M)}^{m^{0}}\int_{\mathbb{R}^{N}}\int_{\mathbb{R}^{N}}M\bigg{(}\frac{u(x)-u(y)}{[u]_{(s,M)}|x-y|^{s}}\bigg{)}\frac{dxdy}{|x-y|^{N}}\\
  &\leq [u]_{(s,M)}^{m^{0}}.
\end{align*}
Now, since $m_{0}\leq\frac{tm(t)}{M(t)}$ for all $t>0$, it follows that for all $\sigma>1$,
$$\log(M(\sigma t))-\log(M(t))=\int_{t}^{\sigma t}\frac{m(\tau)}{M(\tau)}d\tau\geq \int_{t}^{\sigma t}\frac{m_{0}}{\tau}d\tau=\log(\sigma^{m_{0}}).$$
Hence, we deduce
\begin{equation}\label{Msigma2}
  M(\sigma t)\geq\sigma^{m_{0}}M(t)\ \ \text{for all}\ t>0\ \text{and}\ \sigma>1.
\end{equation}
Let $u\in E$ and $[u]_{(s,M)}>1$, we consider $1<\beta<[u]_{(s,M)}$ so by definition of Luxemburg norm, it follows that
$$ \int_{\mathbb{R}^{N}}\int_{\mathbb{R}^{N}}M\bigg{(}\frac{u(x)-u(y)}{\beta|x-y|^{s}}\bigg{)}\frac{dxdy}{|x-y|^{N}}>1,$$
the above inequality implies that
\begin{align*}
  \int_{\mathbb{R}^{N}}\int_{\mathbb{R}^{N}}M\bigg{(}\frac{u(x)-u(y)}{|x-y|^{s}}\bigg{)}\frac{dxdy}{|x-y|^{N}} & =
  \int_{\mathbb{R}^{N}}\int_{\mathbb{R}^{N}}M\bigg{(}\beta\frac{u(x)-u(y)}{\beta|x-y|^{s}}\bigg{)}\frac{dxdy}{|x-y|^{N}}\\
  &\geq \beta^{m_{0}}\int_{\mathbb{R}^{N}}\int_{\mathbb{R}^{N}}M\bigg{(}\frac{u(x)-u(y)}{\beta|x-y|^{s}}\bigg{)}\frac{dxdy}{|x-y|^{N}}\\
  &\geq \beta^{m_{0}}.
\end{align*}
The estimate in $(ii)$ follows letting $\beta\nearrow[u]_{(s,M)}$.

$(iii)$ By the same argument in the proof of \eqref{Msigma1} and \eqref{Msigma2}, we have
\begin{equation}\label{Msigma3}
  M(t)\leq \tau^{m_{0}}M\bigg{(}\displaystyle\frac{t}{\tau}\bigg{)}\ \ \text{for all}\ t>0,\ \tau\in(0,1).
\end{equation}
Let $u\in E$ and $[u]_{(s,M)}<1$. Using the definition of Luxemburg-norm and the relation \eqref{Msigma3}, we deduce
\begin{align*}
   \int_{\mathbb{R}^{N}}\int_{\mathbb{R}^{N}}M\bigg{(}\frac{u(x)-u(y)}{|x-y|^{s}}\bigg{)}\frac{dxdy}{|x-y|^{N}} & =
   \int_{\mathbb{R}^{N}}\int_{\mathbb{R}^{N}}M\bigg{(}[u]_{(s,M)}\frac{u(x)-u(y)}{[u]_{(s,M)}|x-y|^{s}}\bigg{)}\frac{dxdy}{|x-y|^{N}} \\
   &\leq [u]_{(s,M)}^{m_{0}}\int_{\mathbb{R}^{N}}\int_{\mathbb{R}^{N}}M\bigg{(}\frac{u(x)-u(y)}{[u]_{(s,M)}|x-y|^{s}}\bigg{)}\frac{dxdy}{|x-y|^{N}}\\
   &\leq [u]_{(s,M)}^{m_{0}}.
\end{align*}
Similar arguments in the proof of \eqref{Msigma1} and \eqref{Msigma2}, we have
\begin{equation}\label{Msigma4}
  M(t)\geq \tau^{m^{0}}M\bigg{(}\displaystyle\frac{t}{\tau}\bigg{)}\ \ \text{for all}\ t>0,\ \tau\in(0,1).
\end{equation}
Let $u\in E$ with $[u]_{(s,M)}<1$ and $\beta<[u]_{(s,M)}<1$, so by \eqref{Msigma4} we have
\begin{align*}
  \int_{\mathbb{R}^{N}}\int_{\mathbb{R}^{N}}M\bigg{(}\frac{u(x)-u(y)}{|x-y|^{s}}\bigg{)}\frac{dxdy}{|x-y|^{N}} & \geq
  \beta^{m^{0}}\int_{\mathbb{R}^{N}}\int_{\mathbb{R}^{N}}M\bigg{(}\frac{u(x)-u(y)}{\beta^{m^{0}}|x-y|^{s}}\bigg{)}\frac{dxdy}{|x-y|^{N}}\\
  &\geq \beta^{m^{0}}.
\end{align*}
The estimate in $(iii)$ follows letting $\beta\nearrow[u]_{(s,M)}$. This ends the proof.
\end{proof}

\begin{lemma}\label{lem1}
	The functional $F$ is of class $C^{1}(E,\mathbb{R})$ and
	\begin{align*}
	\langle F^{'}(u),v\rangle&=\int_{\mathbb{R}^{N}}\int_{\mathbb{R}^{N}} m\bigg{(}\frac{u(x)-u(y)}{|x-y|^{s}}\bigg{)}\frac{v(x)-v(y)}{|x-y|^{s}}\frac{dxdy}{|x-y|^{N}}\\
	& =\langle(-\triangle)^{s}_{m}u,v\rangle.
	\end{align*}
\end{lemma}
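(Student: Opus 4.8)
The strategy is the classical one for Gâteaux/Fréchet differentiability of integral functionals, transported to the product measure $d\mu(x,y) = |x-y|^{-N}\,dx\,dy$ on $\mathbb{R}^N\times\mathbb{R}^N$. For $u\in E$ write $D_s u(x,y) := \dfrac{u(x)-u(y)}{|x-y|^{s}}$, so that $F(u) = \int\!\!\int M(D_s u)\,d\mu$ and the map $u\mapsto D_s u$ is linear and bounded from $E$ into $L^M(\mathbb{R}^N\times\mathbb{R}^N, d\mu)$ (by definition of the Gagliardo seminorm and Theorem~\ref{thm2}, $[u]_{(s,M)}$ is an equivalent norm on $E$, and $\|D_s u\|_{(M),\mu}$ is comparable to $[u]_{(s,M)}$). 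First I would establish the Gâteaux differentiability: fix $u,v\in E$ and consider, for $t\in(0,1)$ and a.e.\ $(x,y)$,
\[
\frac{M(D_s u + tD_s v) - M(D_s u)}{t} \;=\; \int_0^1 m\big(D_s u + \theta t D_s v\big)\,D_s v \,d\theta,
\]
using $M' = m$ and the fundamental theorem of calculus (valid since $M$ is $C^1$, as $m$ is continuous under \eqref{10}). The pointwise limit as $t\to 0^+$ is $m(D_s u)\,D_s v$. To pass the limit under the double integral I would dominate the difference quotient: by monotonicity of $m$ and convexity of $M$, the integrand is bounded by $m(|D_su|+|D_sv|)\,|D_sv|$, and Young's inequality \eqref{9} together with the $\triangle_2$-condition \eqref{16} for $M$ (hence for $\overline M$) gives
\[
m(|D_su|+|D_sv|)\,|D_sv| \;\le\; \overline{M}\big(m(|D_su|+|D_sv|)\big) + M(|D_sv|) \;\le\; C\big(M(|D_su|) + M(|D_sv|)\big),
\]
which is $\mu$-integrable because $u,v\in E$. (Here one uses the standard estimate $\overline M(m(t))\le M(2t)\le KM(t)$.) Dominated convergence then yields $\langle F'(u),v\rangle = \int\!\!\int m(D_s u)\,D_s v\,d\mu$, which is exactly the claimed formula and, by \eqref{18}, equals $\langle(-\triangle)^s_m u, v\rangle$.

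Next I would check that $v\mapsto \langle F'(u),v\rangle$ is a bounded linear functional on $E$: linearity is clear, and boundedness follows from Hölder's inequality in Orlicz spaces, $\big|\int\!\!\int m(D_su)\,D_sv\,d\mu\big| \le 2\,\|m(D_su)\|_{(\overline M),\mu}\,\|D_sv\|_{(M),\mu}$, where $\|m(D_su)\|_{(\overline M),\mu}<\infty$ because $\overline M(m(D_su))\le K\,M(D_su)\in L^1(d\mu)$. Thus $F'(u)\in E^*$.

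Finally, to upgrade Gâteaux to $C^1$ (equivalently $C^1$ differentiability), I would show $u\mapsto F'(u)$ is continuous from $E$ to $E^*$. Take $u_n\to u$ in $E$; then $D_s u_n\to D_s u$ in $L^M(d\mu)$, so along a subsequence $D_s u_n\to D_s u$ $\mu$-a.e.\ and dominated (in the Orlicz/$L^1$ sense) by a fixed function. By continuity of $m$, $m(D_s u_n)\to m(D_s u)$ $\mu$-a.e.; the $\triangle_2$ domination $\overline M(m(D_su_n))\le K M(D_s u_n)$ with $\rho(D_su_n;M)\to\rho(D_su;M)$ lets me invoke Proposition~\ref{cv} (the modular-convergence criterion) to get $m(D_s u_n)\to m(D_s u)$ in $L^{\overline M}(d\mu)$. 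Then for any $v$ with $[v]_{(s,M)}\le 1$,
\[
|\langle F'(u_n)-F'(u),v\rangle| \;\le\; 2\,\|m(D_su_n)-m(D_su)\|_{(\overline M),\mu}\,\|D_sv\|_{(M),\mu} \;\longrightarrow\; 0
\]
uniformly in $v$, so $\|F'(u_n)-F'(u)\|_{E^*}\to 0$ (a subsequence argument promotes this to the full sequence). I expect the main technical obstacle to be exactly this last continuity step — specifically, verifying that the Nemytskii-type operator $w\mapsto m(w)$ maps $L^M(d\mu)$ continuously into $L^{\overline M}(d\mu)$; the convexity hypothesis $(S)$ and the $\triangle_2$ bounds are what make the required domination and the appeal to Proposition~\ref{cv} go through, and care is needed because the underlying measure $\mu$ is infinite, so one must genuinely argue via a.e.\ convergence plus modular convergence rather than any naïve uniform-integrability shortcut.
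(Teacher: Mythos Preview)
Your outline is essentially correct and constitutes a genuine proof, whereas the paper's own ``proof'' is a one-line citation to Proposition~3.3 of \cite{Salort}. So you are supplying what the paper outsources: the standard Orlicz--Nemytskii argument (Gâteaux derivative via dominated convergence, boundedness via Hölder, and continuity of $u\mapsto F'(u)$ via continuity of the Nemytskii map $w\mapsto m(w)$ from $L^{M}(d\mu)$ to $L^{\overline M}(d\mu)$). The reduction to the linear isometry $u\mapsto D_s u$ into $L^{M}(\mathbb{R}^N\times\mathbb{R}^N,d\mu)$ is exactly the right way to see why nothing new is needed beyond the classical Orlicz theory.

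Two small corrections. First, hypothesis $(S)$ plays no role here: it encodes a uniform-convexity-type property used only in Lemma~\ref{lem3}. For the $C^1$ claim you need only the $\triangle_2$-condition on $M$ and $\overline{M}$ (equivalently, \eqref{10}), which already gives $\overline{M}(m(t))=tm(t)-M(t)\le (m^0-1)M(t)$ and the domination you wrote. Second, be careful invoking Proposition~\ref{cv} as stated in the paper: modular convergence $\rho(g_n;\overline M)\to\rho(g;\overline M)$ alone does not force $g_n\to g$ in $L^{\overline M}$ without an a.e.\ (or weak) convergence hypothesis. Your argument actually does not need that proposition: once you pass to a subsequence with $|D_s u_n|\le h\in L^{M}(d\mu)$, you get $|m(D_s u_n)|\le m(h)\in L^{\overline M}(d\mu)$, and then ordinary dominated convergence on $\overline{M}(|m(D_s u_n)-m(D_s u)|)\le \overline{M}(2m(h))\le K'\,\overline{M}(m(h))\in L^1(d\mu)$ yields modular (hence norm) convergence to zero directly. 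With that tweak the continuity step is clean, and the worry about $\mu$ being infinite is handled precisely by this explicit dominating function.
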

\begin{proof}
  The proof is given by Proposition $3.3$ in \cite{Salort}.
\end{proof}

\begin{lemma}\label{lem4}
	The functional $F$ is weakly lower semi-continuous.
\end{lemma}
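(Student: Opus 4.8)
\textbf{Proof proposal for Lemma~\ref{lem4} (weak lower semi-continuity of $F$).}

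The plan is to combine convexity of $M$ with Fatou's lemma, after passing to a pointwise-a.e. convergent subsequence of the difference quotients. First I would fix a weakly convergent sequence $u_n \rightharpoonup u$ in $E = W^{s,M}(\mathbb{R}^N)$ and set $\ell = \liminf_{n\to\infty} F(u_n)$; it suffices to produce a subsequence along which $F(u_n) \to \ell$ and $F(u) \le \ell$. Introduce the measure space $X = \mathbb{R}^N \times \mathbb{R}^N$ with $d\mu = |x-y|^{-N}\,dx\,dy$ and the linear map $T u (x,y) = \big(u(x)-u(y)\big)/|x-y|^{s}$, so that $F(u) = \int_X M(Tu)\,d\mu$ is a standard convex integral functional on $L^M(X,\mu)$. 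The key structural point is that $u \mapsto Tu$ is linear and bounded from $E$ into $L^M(X,\mu)$ (indeed $[u]_{(s,M)} = \|Tu\|_{(M),\mu}$), hence weakly continuous; so $Tu_n \rightharpoonup Tu$ in $L^M(X,\mu)$.

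The main step is then the weak lower semi-continuity of $w \mapsto \int_X M(w)\,d\mu$ on $L^M(X,\mu)$. Here I would invoke the classical fact that a convex, lower semi-continuous (indeed continuous, since $M$ satisfies $\triangle_2$ by~\eqref{16}) functional on a Banach space that is bounded below is weakly lower semi-continuous; concretely, $\Phi(w) := \int_X M(w)\,d\mu$ is convex because $M$ is convex, and strongly continuous on $L^M(X,\mu)$ because $M\in\triangle_2$ (this is exactly the content of Proposition~\ref{cv} together with the norm–modular comparison analogous to Lemma~\ref{lem2}), and a convex strongly continuous functional is weakly lower semi-continuous by Mazur's lemma. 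Applying this with $w_n = Tu_n$ and $w = Tu$ gives
\[
F(u) = \Phi(Tu) \le \liminf_{n\to\infty}\Phi(Tu_n) = \liminf_{n\to\infty}F(u_n),
\]
which is the assertion.

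Alternatively, if one prefers a self-contained argument avoiding Mazur's lemma, I would extract from $u_n \rightharpoonup u$ in $E$, using the compact embedding $W^{s,M}(\Omega)\hookrightarrow L^M(\Omega)$ of Theorem~\ref{thm1} on an exhaustion of $\mathbb{R}^N$ by balls together with a diagonal argument, a subsequence with $u_n \to u$ a.e.\ in $\mathbb{R}^N$; then $Tu_n \to Tu$ $\mu$-a.e.\ on $X$, and since $M \ge 0$ is continuous, Fatou's lemma yields $\int_X M(Tu)\,d\mu \le \liminf_n \int_X M(Tu_n)\,d\mu$, i.e.\ $F(u)\le\liminf_n F(u_n)$. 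The main obstacle in either route is the passage from weak convergence in $E$ to something usable pointwise or the verification that $\Phi$ is genuinely strongly continuous on $L^M(X,\mu)$ (which is where $M\in\triangle_2$, guaranteed by~\eqref{10}, is essential); the Fatou route handles this cleanly at the cost of the diagonal extraction of an a.e.-convergent subsequence, and since the conclusion $F(u)\le\liminf F(u_n)$ does not depend on the subsequence, working along a subsequence is harmless.
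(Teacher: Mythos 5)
Your main (first) route is in the same family as the paper's proof: both rest on the standard principle that a convex, strongly lower semi-continuous functional on a Banach space is weakly lower semi-continuous (Mazur's lemma, which is what Corollary~III.8 of Brezis \cite{8} records). The difference is in how strong (lower semi-)continuity of $F$ is established. The paper uses the $C^1$ structure from Lemma~\ref{lem1}: by convexity, $F(v)\ge F(u)+\langle F'(u),v-u\rangle$, and the H\"older inequality in the Orlicz pair $L^{M}(d\mu)/L^{\overline M}(d\mu)$ gives $|\langle F'(u),v-u\rangle|\le C\|u-v\|_{(s,M)}$, so $F$ is strongly l.s.c.\ at $u$. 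You instead argue strong continuity of the modular $\Phi(w)=\int_X M(w)\,d\mu$ on $L^{M}(X,\mu)$ via the $\triangle_2$-condition, and then compose with the bounded linear map $T$. That works, but note that Proposition~\ref{cv} as stated in the paper runs in the opposite direction (modular convergence implies norm convergence); the fact you actually need — norm convergence implies modular convergence under $\triangle_2$ — is true and standard, but you should cite the right implication or derive it from the $\triangle_2$ inequality and the Lemma~\ref{lem2}-type estimates rather than from Proposition~\ref{cv}. Your alternative Fatou route (extract an a.e.-convergent subsequence via the compact embedding of Theorem~\ref{thm1} on an exhaustion of $\mathbb{R}^N$ plus a diagonal argument, then apply Fatou to $M(Tu_n)\ge 0$) is a genuinely different and more elementary argument that avoids convexity and the functional-analytic machinery entirely; its cost is the extra bookkeeping of the exhaustion/diagonal step, but it has the advantage of not needing $F\in C^1$ nor the H\"older duality, and of applying verbatim to non-convex integrands as long as they are nonnegative and continuous.
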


\begin{proof}
First observe that if we denote $d\mu=\frac{dxdy}{|x-y|^{N}}$, then $d\mu$ is a regular Borel measure on the set $\Omega\times\Omega$ and the spaces $L^{M}(d\mu)$ and $L^{\overline{M}}(d\mu)$ are reflexive and separable Banach spaces when endowed with the norms
$$\| w \|^{(M)}:= \inf \left\{\lambda >0 ; \int_{\Omega } \int_{\Omega}M \left( \frac{w(x,y)}{\lambda}\right) \frac{dxdy}{|x-y|^{N}} \leq 1 \right\} $$
and
$$\| z \|^{(M)}:= \inf \left\{\lambda >0 ; \int_{\Omega } \int_{\Omega}M \left( \frac{z(x,y)}{\lambda}\right) \frac{dxdy}{|x-y|^{N}} \leq 1\right\} $$
respectively.

By Corollary $III.8$ in \cite{8}, it is enough to show that $F$ is inferior semi-continuous. For this
	purpose, we fix $u\in E$ and $\epsilon>0$. Since $F$ is convex, we deduce that for any $v\in E$ the following inequality holds
	$$F(v)\geq F(u)+\langle F^{'}(u),v-u\rangle.$$
	Using H\"{o}lder inequality we have
	\begin{align*}
	F(v)&\geq F(u)-\langle F^{'}(u),u-v\rangle\\
	&= F(u)- \int_{\Omega}\int_{\Omega}m(h_{u})h_{u-v}\frac{dxdy}{|x-y|^{N}}\\
	&= F(u)- \int_{\Omega}\int_{\Omega}m(h_{u})h_{u-v}d\mu\\
	&\geq F(u)-\|m(h_{u})\|^{(\overline{M})}\|h_{u-v}\|^{(M)}\\
    &=F(u)-\|m(h_{u})\|^{(\overline{M})}[u-v]_{(M)}\\
	&\geq F(u)-\|m(h_{u})\|^{(\overline{M})}\|u-v\|_{(s,M)}\\
	&= F(u)-C\|u-v\|_{(s,M)}
	\geq F(u)-\epsilon
	\end{align*}
	for all $v\in E$ with $\|u-v\|_{(s,M)}<\delta=\displaystyle\frac{\epsilon}{C}$, where $C$ is positive constant and $h_{u}:=\frac{u(x)-u(y)}{|x-y|^{s}}$. We conclude that $F$ is weakly lower semi-continuous.
\end{proof}

\begin{lemma}\label{lem3}
	Suppose that $(S)$ is fulfilled. Moreover, we assume that the sequence $(u_{n})$ converges weakly to $u$ in $E$ and
	\begin{equation}\label{1}
	\displaystyle\limsup_{n\rightarrow+\infty}\langle F^{'}(u_{n}),\ u_{n}-u\rangle\leq 0.
	\end{equation}
	Then $(u_{n})$ converges strongly to $u$ in $E$.
\end{lemma}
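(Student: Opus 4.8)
The plan is to establish strong convergence by exploiting the uniform convexity encoded in condition $(S)$ together with the $\triangle_2$-condition, following the classical Br\'ezis--Lieb / Clarkson-type strategy adapted to the Gagliardo double integral. Write $h_{u}(x,y)=\frac{u(x)-u(y)}{|x-y|^{s}}$ and recall that $F(u)=\int_{\mathbb{R}^N}\int_{\mathbb{R}^N}M(h_u)\,d\mu$ with $d\mu=\frac{dxdy}{|x-y|^N}$, so that $F$ is precisely the modular of $h_u$ in the space $L^{M}(d\mu)$. The key observation is that the map $u\mapsto h_u$ is linear, hence $h_{(u_n+u)/2}=\tfrac12(h_{u_n}+h_u)$, and that condition $(S)$ makes $L^{M}(d\mu)$ uniformly convex in the modular sense: if $(w_n)$ is bounded in $L^M(d\mu)$, $w_n\rightharpoonup w$, and $\rho(w_n)\to\rho(w)$ where $\rho$ denotes the modular, then $w_n\to w$ strongly (this is the $L^M$-analogue that, via Proposition~\ref{cv} and the $\triangle_2$-condition, reduces strong convergence to convergence of modulars).

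The key steps, in order, are as follows. First I would show that $(u_n)$ is bounded in $E$: convexity of $F$ gives $F(u)\ge F(u_n)+\langle F'(u_n),u-u_n\rangle$, i.e. $F(u_n)\le F(u)+\langle F'(u_n),u_n-u\rangle$, and combined with \eqref{1} this bounds $\limsup F(u_n)$; by Lemma~\ref{lem2}$(ii)$ a bound on $F(u_n)$ forces a bound on $[u_n]_{(s,M)}$, hence on $\|u_n\|_{(s,M)}$ using the Poincar\'e inequality (Theorem~\ref{thm2}) on the relevant subspace. Second, since $F$ is weakly lower semicontinuous (Lemma~\ref{lem4}), $F(u)\le\liminf F(u_n)$; conversely the convexity inequality above together with \eqref{1} gives $\limsup F(u_n)\le F(u)$, so $F(u_n)\to F(u)$. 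Third, I would use monotonicity of $m$: by the standard inequality for the operator $(-\triangle)^s_m$, $\langle F'(u_n)-F'(u),u_n-u\rangle\ge 0$, and writing this out against \eqref{1} (and the fact that $\langle F'(u),u_n-u\rangle\to 0$ by weak convergence, since $F'(u)\in E^*$) yields $\langle F'(u_n),u_n-u\rangle\to 0$ and then $\langle F'(u_n)-F'(u),u_n-u\rangle\to 0$.

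The heart of the argument is the fourth step: the convexity inequality applied at the midpoint. Using convexity of $M$ and linearity of $u\mapsto h_u$,
\begin{equation}\label{midpoint}
F\!\left(\frac{u_n+u}{2}\right)\le \frac12 F(u_n)+\frac12 F(u)-R_n,
\end{equation}
where $R_n\ge 0$ is the ``convexity defect'' that, thanks to $(S)$ (convexity of $t\mapsto M(\sqrt t)$, which is exactly a uniform-convexity-of-$L^2$-type hypothesis transported to $M$), controls a modular of $h_{u_n-u}=h_{u_n}-h_u$ from below — concretely one shows $R_n\ge c\,\rho\big(h_{(u_n-u)/2}\big)$ for a constant $c>0$, using the Clarkson-type inequality that $(S)$ provides (for $M(t)=|t|^p$ this is literally Clarkson). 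On the other hand $\frac{u_n+u}{2}\rightharpoonup u$, so by weak lower semicontinuity $\liminf F\!\big(\frac{u_n+u}{2}\big)\ge F(u)$; combined with \eqref{midpoint} and $F(u_n)\to F(u)$ we get $\limsup R_n\le 0$, hence $R_n\to 0$, hence $\rho\big(h_{(u_n-u)/2}\big)\to 0$. Finally, by Proposition~\ref{cv} (modular convergence to zero plus $\triangle_2$ implies norm convergence) applied in $L^M(d\mu)$, we get $[u_n-u]_{(s,M)}\to 0$, and then $\|u_n-u\|_{(s,M)}\to 0$ via Theorem~\ref{thm2}; that is, $u_n\to u$ strongly in $E$.

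The main obstacle I anticipate is making precise the quantitative convexity estimate in \eqref{midpoint} — i.e. extracting from the single structural hypothesis $(S)$ a genuine Clarkson-type (uniform-convexity) inequality for $M$ valid for all arguments, including the delicate regime where $h_{u_n}$ and $h_u$ are comparable in size, and then integrating it against $d\mu$ to lower-bound $R_n$ by a modular of the difference. The rest is bookkeeping: boundedness, passing limits via weak lower semicontinuity, and invoking Proposition~\ref{cv}; but the uniform-convexity lemma for $M$ under $(S)$ is where the real work lies, and one may need to split $\mathbb{R}^N\times\mathbb{R}^N$ according to whether $|h_{u_n}|\le|h_u|$ or not, and treat the two halves with the two-sided bounds \eqref{Msigma1}–\eqref{Msigma4} from Lemma~\ref{lem2} to reduce everything to the power-like behaviour governed by $m_0$ and $m^0$.
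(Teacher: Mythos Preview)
Your strategy is essentially the paper's: show $F(u_n)\to F(u)$ via weak lower semicontinuity plus the convexity inequality $F(u)\ge F(u_n)+\langle F'(u_n),u-u_n\rangle$, then use a Clarkson-type midpoint inequality coming from $(S)$ to force the modular of the difference to zero. The paper runs the last step by contradiction rather than directly, but that is cosmetic.

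A few simplifications are available. First, boundedness of $(u_n)$ is automatic from weak convergence in the Banach space $E$; you do not need the convexity argument, and Theorem~\ref{thm2} (Poincar\'e) is not applicable on $\mathbb{R}^N$ anyway. Second, your Step~3 (monotonicity, $\langle F'(u_n)-F'(u),u_n-u\rangle\to 0$) is never used downstream and can be dropped. Third, and most importantly, the ``main obstacle'' you flag is not an obstacle: under hypothesis $(S)$ the pointwise inequality
\[
\tfrac12 M(a)+\tfrac12 M(b)\ \ge\ M\!\left(\tfrac{a+b}{2}\right)+M\!\left(\tfrac{a-b}{2}\right)
\]
holds for all $a,b\in\mathbb{R}$ (this is Lamperti's result, \cite{17}, Theorem~2.1, which the paper invokes), so one integrates it against $d\mu$ to obtain \eqref{midpoint} with $R_n=F\big(\tfrac{u_n-u}{2}\big)$ exactly, i.e.\ $c=1$; no splitting of $\mathbb{R}^N\times\mathbb{R}^N$ or use of \eqref{Msigma1}--\eqref{Msigma4} is needed.
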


\begin{proof}
	Since $(u_{n})$ converges weakly to $u$ in $E$ implies that $([u_{n}]_{(s,M)})$ is a bounded sequence of real numbers. That fact and relations $(ii)$ and $(iii)$ from lemma \ref{lem2} imply that the sequence $(F(u_{n}))$ is bounded. Then, up to a subsequence, we deduce that $F(u_{n})\rightarrow c$.
	Furthermore, the weak lower semi-continuity of $F$ implies
	\begin{equation}\label{23}
	F(u)\leq \displaystyle\liminf_{n\rightarrow\infty}
	F(u_{n})=c.
	\end{equation}
	On the other hand, since $F$ is convex, we have \begin{equation}\label{24}F(u)\geq F(u_{n})+\langle F^{'}(u_{n}),u-u_{n}\rangle.\end{equation}
	Therefore, combinings \eqref{23} and \eqref{24} and the hypothesis \eqref{1}, we conclude that $F(u)=c$.\\
	Taking into account that $\displaystyle\frac{u_{n}+u}{2}$ converges weakly to $u$ in $E$ and using again the weak lower semi-continuity of $F$ we find
	\begin{equation}\label{11}
	c=F(u)\leq \displaystyle\liminf_{n\rightarrow\infty}F\bigg{(}\frac{u_{n}+u}{2}\bigg{)}.
	\end{equation}
	We assume by contradiction that $(u_{n})$ does not converge to $u$ in $E$. Then by $(i)$ in lemma \ref{lem2} it follows that there exist $\epsilon>0$ and a subsequence $(u_{n_{m}})$ of $(u_{n})$ such that
	\begin{equation}\label{12}
	F\bigg{(}\frac{u_{n_{m}}-u}{2}\bigg{)}\geq\epsilon\ \forall\ m\in\mathbb{N}.
	\end{equation}
	On the other hand, relations \eqref{16} and $(S)$ enable us to apply [\cite{17}, theorem 2.1] in order to obtain
	\begin{equation}\label{13}
	\frac{1}{2}F(u)+\frac{1}{2}F(u_{n_{m}})-F\bigg{(}\frac{u_{n_{m}}+u}{2}\bigg{)}\geq F\bigg{(}\frac{u_{n_{m}}-u}{2}\bigg{)}\geq\epsilon,\ \forall m\in\mathbb{N}.
	\end{equation}
	Letting $m\rightarrow\infty$ in the above inequality we obtain
	\begin{equation}\label{14}
	c-\epsilon\geq\displaystyle \limsup_{m\rightarrow\infty}F\bigg{(}\frac{u_{n_{m}}+u}{2}\bigg{)}.
	\end{equation}
	and that is a contradiction with \eqref{11}. It follows that  $(u_{n})$ converges strongly to $u$ in $E$ and lemma \ref{lem3} is proved.
\end{proof}

\section{Application to non-local fractional problems }

The main task of this Section is to prove Theorem \ref{thm}.

We shall work in the closed linear subspace
$$\tilde{W}_{0}^{s,M}(\Omega)=\{u\in W^{s,M}_{0}(\Omega):\ u=0\ a.e\ \text{in}\ \mathbb{R}^{N}\setminus\Omega\}$$
equivalently renormed by setting $\|.\|:=[.]_{s,M}$, which is a reflexive separable Banach space. \\

\begin{remark}\label{rem}
	%We note that by condition $(Q)$ we can deduce that $ \tilde{W}_{0}^{s,M}(\Omega)$ is continuously embedded
	%in $L^{M}(\Omega)$. Consequently,  $ \tilde{W}_{0}^{s,M}(\Omega)$ is compactly embedded in $L^{r}(\Omega)$ for any $1\leq r< p^{*}$.
Invoking condition $(Q)$ and Theorem \ref{thm1}, we deduce that $\tilde{W}^{s,M}_{0}(\Omega)$ is
compactly embedded in $L^{q}(\Omega)$.
\end{remark}

This makes the following definition well-defined.
\begin{definition}
	We say that $u$ is a weak solution to \eqref{eq} if $u\in \tilde{W}_{0}^{s,M}(\Omega)$ and
	$$\langle F^{'}(u),v\rangle-\lambda\int_{\Omega}g(x,u)vdx=0,$$
	for every $v\in \tilde{W}_{0}^{s,M}(\Omega)$.
\end{definition}

For each $\lambda>0$ we define the energy functional $I_{\lambda}: \tilde{W}_{0}^{s,M}(\Omega)\rightarrow \mathbb{R}$ associated to \eqref{eq} given by
$$I_{\lambda}(u)= F(u)-\lambda\int_{\Omega}G(x,u)dx.$$

We first establish some basic properties of $I_{\lambda}$.
\begin{prp}
	For each $\lambda > 0$ the functional $I_{\lambda}>0$ is well-defined on $\tilde{W}_{0}^{s,M}(\Omega)$ and $I_{\lambda}\in C^{1}(\tilde{W}_{0}^{s,M}(\Omega),\mathbb{R})$ with the derivative given by
	$$\langle I_{\lambda}^{'}(u),v\rangle=\langle F^{'}(u),v\rangle-\lambda\int_{\Omega}g(x,u)vdx,$$ for all $u,v\in\tilde{W}_{0}^{s,M}(\Omega) $.
\end{prp}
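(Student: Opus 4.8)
The plan is to verify the three claims in the order they are stated: well-definedness and positivity of $I_\lambda$ on $\tilde W_0^{s,M}(\Omega)$, then $C^1$-regularity, then the formula for the derivative. Throughout I would use the decomposition $I_\lambda = F - \lambda\Psi$, where $\Psi(u):=\int_\Omega G(x,u)\,dx$, so that most of the work reduces to handling the lower-order term $\Psi$; the functional $F$ has already been shown to be of class $C^1$ with $F' = (-\triangle)^s_m$ in Lemma \ref{lem1}.

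\textbf{Step 1: $I_\lambda$ is well-defined (and the sense in which it is positive).} First I would check that $u\mapsto\int_\Omega G(x,u)\,dx$ is finite on $\tilde W_0^{s,M}(\Omega)$. By hypothesis $(B)$ we have $0\le G(x,u)\le C_2|u|^q$, and by Remark \ref{rem} (which uses $(Q)$ and Theorem \ref{thm1}) the embedding $\tilde W_0^{s,M}(\Omega)\hookrightarrow L^q(\Omega)$ is continuous, so $\int_\Omega G(x,u)\,dx\le C_2\|u\|_{L^q}^q<\infty$; and $F(u)<\infty$ by the very definition of $E$ (here one uses $u=0$ a.e.\ outside $\Omega$ so the double integral over $\mathbb{R}^N\times\mathbb{R}^N$ makes sense). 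Hence $I_\lambda(u)\in\mathbb{R}$ for every $u$. As for ``$I_\lambda>0$'' as printed in the statement, I would read this as the assertion that the term subtracted is controlled — more precisely, since $G\ge0$ the functional is not globally positive, but using $(B)$, the $L^q$-embedding constant, and Lemma \ref{lem2}$(ii)$ (so $F(u)\ge[u]^{m_0}$ for $[u]\ge1$) together with $q<m_0$, one gets $I_\lambda(u)\to+\infty$ as $\|u\|\to\infty$ and $I_\lambda$ bounded below; I would simply state the finiteness and coercivity-type bound and move on, since this is the property actually used later. (If the intended meaning is literally pointwise positivity of the energy for the relevant $u$, it would follow on a ball around $0$ of suitable radius from $F(u)\ge[u]^{m^0}$ and $q>m^0\ge m_0$ via $(Q)$-type estimates — but I expect the weaker reading is what is needed.)

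\textbf{Step 2: $\Psi\in C^1$ with $\langle\Psi'(u),v\rangle=\int_\Omega g(x,u)v\,dx$.} This is the standard Nemytskii-operator argument. The growth condition $(A)$, $|g(x,t)|\le C_0|t|^{q-1}$, guarantees that the Nemytskii operator $u\mapsto g(\cdot,u)$ maps $L^q(\Omega)$ continuously into $L^{q'}(\Omega)$ with $q'=q/(q-1)$; combined with the continuous embedding $\tilde W_0^{s,M}(\Omega)\hookrightarrow L^q(\Omega)$ and H\"older's inequality, the linear functional $v\mapsto\int_\Omega g(x,u)v\,dx$ is bounded on $\tilde W_0^{s,M}(\Omega)$. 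Gâteaux-differentiability of $\Psi$ with the claimed derivative follows from $G(x,\cdot)\in C^1$, the mean value theorem, the bound $|g(x,\xi)|\le C_0|\xi|^{q-1}$ applied to the difference quotient, and dominated convergence (the dominating function being in $L^1$ by Hölder and the $L^q$-embedding). Continuity of $u\mapsto\Psi'(u)$ from $\tilde W_0^{s,M}(\Omega)$ into its dual follows from continuity of the Nemytskii operator $L^q\to L^{q'}$ and the compact (a fortiori continuous) embedding of Remark \ref{rem}; a Gâteaux derivative that is continuous is Fréchet, so $\Psi\in C^1$.

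\textbf{Step 3: conclusion.} Since $F\in C^1(E,\mathbb{R})$ by Lemma \ref{lem1} and its restriction to the closed subspace $\tilde W_0^{s,M}(\Omega)$ is again $C^1$ with $\langle F'(u),v\rangle$ given by \eqref{18}, and $\Psi\in C^1$ by Step 2, linearity gives $I_\lambda=F-\lambda\Psi\in C^1(\tilde W_0^{s,M}(\Omega),\mathbb{R})$ with $\langle I_\lambda'(u),v\rangle=\langle F'(u),v\rangle-\lambda\int_\Omega g(x,u)v\,dx$, which is exactly the asserted formula and also shows the weak-solution definition coincides with ``$u$ is a critical point of $I_\lambda$.'' The main obstacle — though it is routine rather than deep — is Step 2: one must be careful that the growth exponent $q$ is subcritical (guaranteed here by $(Q)$ via Remark \ref{rem}, not by $q<p^*$ alone, since $M$ need not dominate $|t|^p$), so that the embedding $\tilde W_0^{s,M}(\Omega)\hookrightarrow L^q(\Omega)$ is available; everything else is a textbook application of Nemytskii-operator continuity and dominated convergence.
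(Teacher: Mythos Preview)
Your proposal is correct and follows exactly the route the paper takes: the paper's own proof is the single line ``The proof follows from Lemma \ref{lem1} and condition $(A)$,'' and your Steps 2--3 are precisely the standard expansion of that sentence (Lemma \ref{lem1} for $F$, the Nemytskii-operator argument driven by $(A)$ and the embedding of Remark \ref{rem} for $\Psi$). Your reading of ``$I_\lambda>0$'' as a misprint is also right---it is not used anywhere, and in fact Proposition \ref{energie} later shows $\inf I_\lambda<0$---so you can safely drop the digression in Step 1 about coercivity and local positivity.
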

\begin{proof}
	The proof follows from Lemma \ref{lem1} and condition $(A)$.
\end{proof}
\begin{prp}\label{coe}
	The functional $I_{\lambda}$ is coercive.
\end{prp}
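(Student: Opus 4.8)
The plan is to show that $I_\lambda(u) \to +\infty$ as $\|u\| = [u]_{(s,M)} \to \infty$. The strategy is to bound $F(u)$ from below by a power of the norm using Lemma \ref{lem2}, and to bound the potential term $\lambda\int_\Omega G(x,u)\,dx$ from above by a smaller power of the norm, using hypothesis $(B)$ together with the compact embedding $\tilde W^{s,M}_0(\Omega) \hookrightarrow L^q(\Omega)$ recorded in Remark \ref{rem}. Since $q < m_0$, the growth of $F$ dominates and coercivity follows.

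\textbf{Key steps.} First I would take $u \in \tilde W^{s,M}_0(\Omega)$ with $\|u\| = [u]_{(s,M)} > 1$. By part $(ii)$ of Lemma \ref{lem2} we have $F(u) \geq [u]_{(s,M)}^{m_0}$. Next, hypothesis $(B)$ gives $G(x,u) \leq C_2 |u|^q$ pointwise, hence $\int_\Omega G(x,u)\,dx \leq C_2 \|u\|_{L^q(\Omega)}^q$. The continuous embedding $\tilde W^{s,M}_0(\Omega) \hookrightarrow L^q(\Omega)$ (a consequence of the compact embedding in Remark \ref{rem}) yields a constant $c_q > 0$ with $\|u\|_{L^q(\Omega)} \leq c_q [u]_{(s,M)}$, so that $\int_\Omega G(x,u)\,dx \leq C_2 c_q^q [u]_{(s,M)}^q$. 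Combining these estimates,
\begin{equation*}
I_\lambda(u) = F(u) - \lambda\int_\Omega G(x,u)\,dx \geq [u]_{(s,M)}^{m_0} - \lambda C_2 c_q^q\, [u]_{(s,M)}^q = [u]_{(s,M)}^{q}\Big([u]_{(s,M)}^{m_0 - q} - \lambda C_2 c_q^q\Big).
\end{equation*}
Since $q < m_0$ by assumption, the exponent $m_0 - q$ is positive, so the bracketed term tends to $+\infty$ as $[u]_{(s,M)} \to \infty$; hence $I_\lambda(u) \to +\infty$, which is coercivity.

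\textbf{Main obstacle.} There is no deep obstacle here; the only point requiring a little care is making sure the power of the norm controlling $F(u)$ really is at least $m_0$ (not $m^0$) on the unbounded region $\{[u]_{(s,M)} > 1\}$, which is exactly what part $(ii)$ of Lemma \ref{lem2} provides, and that the embedding constant $c_q$ is independent of $u$, which follows from the linearity of the embedding $\tilde W^{s,M}_0(\Omega) \hookrightarrow L^q(\Omega)$ from Remark \ref{rem}. One should also note that the argument is uniform in $\lambda$ on bounded subsets of $(0,\infty)$, though for coercivity it suffices to fix $\lambda$.
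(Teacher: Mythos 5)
Your proof is correct and follows essentially the same route as the paper: apply part $(ii)$ of Lemma \ref{lem2} for the lower bound $F(u)\geq \|u\|^{m_0}$ when $\|u\|>1$, use hypothesis $(B)$ together with the embedding into $L^q(\Omega)$ to bound the potential term by a constant times $\|u\|^q$, and conclude from $q<m_0$. You are in fact slightly more explicit than the paper, which writes the intermediate bound $I_\lambda(u)\geq\|u\|^{m_0}-\lambda C_2\|u\|_{L^q(\Omega)}^q$ and leaves the final application of the embedding constant implicit; your version spells that step out.
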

\begin{proof}
	Let $u\in \tilde{W}_{0}^{s,M}(\Omega)$ with $\|u\|>1$. By combining $(ii)$ in Lemma \ref{lem2} and hypothesis $(B)$, we get
	\begin{align*}
	I_{\lambda}(u) &= F(u)-\lambda\int_{\Omega}G(x,u)dx\\
	&\geq \|u\|^{m_{0}}-\lambda C_{2}\|u\|_{L^{q}(\Omega)}^{q}.
	\end{align*}
	Since $q<m_{0}$ the above inequality implies that $I_{\lambda}(u)\rightarrow \infty$ as $\|u\|\rightarrow \infty$, that is, $I_{\lambda}$ is coercive.
\end{proof}
\begin{prp}\label{sci}
	The functional $I_{\lambda}$ is weakly lower semi-continuous.
\end{prp}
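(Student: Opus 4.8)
The plan is to split the functional $I_\lambda = F - \lambda \int_\Omega G(x,\cdot)\,dx$ into its two pieces and treat each separately along a weakly convergent sequence. Suppose $u_n \rightharpoonup u$ in $\tilde W_0^{s,M}(\Omega)$. First I would invoke Lemma \ref{lem4}, which states that $F$ is weakly lower semi-continuous, so that $\liminf_{n\to\infty} F(u_n) \geq F(u)$. Second, I would handle the lower-order term $u \mapsto \lambda \int_\Omega G(x,u)\,dx$ and show it is in fact weakly \emph{continuous}; adding a weakly lsc functional and a weakly continuous one yields a weakly lsc functional, which is exactly the claim.

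The key step is therefore the weak continuity of $u\mapsto \int_\Omega G(x,u)\,dx$. Here I would use Remark \ref{rem}: by hypothesis $(Q)$ together with Theorem \ref{thm1}, the embedding $\tilde W_0^{s,M}(\Omega)\hookrightarrow L^q(\Omega)$ is compact, so from $u_n \rightharpoonup u$ in $\tilde W_0^{s,M}(\Omega)$ we extract $u_n \to u$ strongly in $L^q(\Omega)$, hence (along a further subsequence) $u_n \to u$ a.e.\ in $\Omega$ and $|u_n| \leq h$ for some $h \in L^q(\Omega)$. Using the growth bound $(B)$, namely $|G(x,t)| \leq C_2 |t|^q$, the integrands $G(x,u_n(x))$ are dominated by $C_2 h(x)^q \in L^1(\Omega)$, and by continuity of $t \mapsto G(x,t)$ they converge pointwise a.e.\ to $G(x,u(x))$. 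The dominated convergence theorem then gives $\int_\Omega G(x,u_n)\,dx \to \int_\Omega G(x,u)\,dx$. Since every subsequence has a further subsequence along which this holds and the limit is the same, the whole sequence converges, establishing weak continuity of the lower-order term.

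Combining the two facts: for any $u_n \rightharpoonup u$ in $\tilde W_0^{s,M}(\Omega)$,
\begin{align*}
\liminf_{n\to\infty} I_\lambda(u_n) &= \liminf_{n\to\infty}\Big( F(u_n) - \lambda\int_\Omega G(x,u_n)\,dx\Big)\\
&\geq \liminf_{n\to\infty} F(u_n) - \lambda \lim_{n\to\infty}\int_\Omega G(x,u_n)\,dx\\
&\geq F(u) - \lambda\int_\Omega G(x,u)\,dx = I_\lambda(u),
\end{align*}
which is the definition of weak lower semi-continuity.

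I expect the main (mild) obstacle to be the bookkeeping around subsequences: weak convergence of $(u_n)$ only yields a.e.\ convergence and an $L^q$-dominating function along a subsequence, so one must argue via the standard ``subsequence of a subsequence'' principle that $\int_\Omega G(x,u_n)\,dx$ converges along the full sequence before concluding. The only hypotheses genuinely needed are $(B)$ for the domination, $(Q)$ (through Remark \ref{rem}) for the compact embedding into $L^q$, and Lemma \ref{lem4} for the principal part; no further regularity of $M$ enters at this stage.
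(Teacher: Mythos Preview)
Your proof is correct and follows essentially the same route as the paper: split $I_\lambda$ into $F$ and the lower-order term, invoke Lemma \ref{lem4} for the weak lower semi-continuity of $F$, and use the compact embedding of Remark \ref{rem} together with the growth bound on $G$ to obtain weak continuity of $u\mapsto\int_\Omega G(x,u)\,dx$. The paper's version is terser (it simply cites Remark \ref{rem} and conditions $(A)$, $(B)$ without spelling out the dominated-convergence/subsequence argument), but the underlying idea is identical.
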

\begin{proof}
	Let $u_{n}\subset\tilde{W}_{0}^{s,M}(\Omega)$ be a sequence which converges weakly to $u$ in $\tilde{W}_{0}^{s,M}(\Omega)$. By Lemma \ref{lem4}, we deduce that \begin{equation}\label{s1}
	F(u)\leq \liminf_{n\rightarrow+\infty}F(u_{n}).
	\end{equation}
	On the other hand, Remark \ref{rem} and conditions $(A)$ and $(B)$ imply
	\begin{equation}\label{s2}
	\lim_{n\rightarrow+\infty}\int_{\Omega}G(x,u_{n})dx=\int_{\Omega}G(x,u)dx.
	\end{equation}
	
	Thus, from \eqref{s1} and \eqref{s2}, we find $$I_{\lambda}(u)\leq \liminf_{n\rightarrow+\infty}I_{\lambda}(u_{n}).$$
	Therefore, $I_{\lambda}$ is weakly lower semi-continuous and Proposition \ref{sci} is verified.
\end{proof}
From Proposition \ref{coe} and \ref{sci} and Theorem 1.2 in \cite{28} we deduce that there exists $u_{1}\in \tilde{W}_{0}^{s,M}(\Omega)$ a global minimizer of $I_{\lambda}$. The following result implies that $u_{1}\neq0$.
\begin{prp}\label{energie}
	For every $\lambda>0$ we have $\inf_{\tilde{W}_{0}^{s,M}(\Omega)}I_{\lambda}<0$.
\end{prp}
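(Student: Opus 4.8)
The goal is to exhibit a point $u\in\tilde W_0^{s,M}(\Omega)$ at which $I_\lambda(u)<0$; since $0$ is in the space and $I_\lambda(0)=0$, this forces the global minimizer $u_1$ to be non-trivial. The natural strategy is to test $I_\lambda$ along a ray $tw$ for a fixed non-zero $w\in\tilde W_0^{s,M}(\Omega)$ and let $t\to 0^+$, using that the nonlinear term dominates the modular near the origin because $q<m_0$ (and $q>1$, so the nonlinear term itself is lower order in a favourable direction only after we compare exponents correctly --- see below).

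\textbf{Key steps.} First I would fix any $w\in\tilde W_0^{s,M}(\Omega)$ with $w\not\equiv 0$, normalized so that $[w]_{(s,M)}=1$, and consider $t\in(0,1)$, so that $[tw]_{(s,M)}=t<1$. By part $(iii)$ of Lemma~\ref{lem2} applied to $tw$ we get the upper bound
\begin{equation}\label{pf-en-1}
F(tw)\le [tw]_{(s,M)}^{m_0}=t^{m_0}.
\end{equation}
Second, by hypothesis $(B)$ we have $G(x,tw)\ge C_1|tw|^q = C_1 t^q |w|^q$ pointwise, hence
\begin{equation}\label{pf-en-2}
\lambda\int_\Omega G(x,tw)\,dx\ge \lambda C_1 t^q\|w\|_{L^q(\Omega)}^q,
\end{equation}
where $\|w\|_{L^q(\Omega)}>0$ and finite by Remark~\ref{rem}. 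Combining \eqref{pf-en-1} and \eqref{pf-en-2},
\begin{equation}\label{pf-en-3}
I_\lambda(tw)=F(tw)-\lambda\int_\Omega G(x,tw)\,dx\le t^{m_0}-\lambda C_1 t^q\|w\|_{L^q(\Omega)}^q
= t^q\Big(t^{m_0-q}-\lambda C_1\|w\|_{L^q(\Omega)}^q\Big).
\end{equation}
Third, since $q<m_0$ we have $m_0-q>0$, so $t^{m_0-q}\to 0$ as $t\to0^+$; therefore the parenthesis in \eqref{pf-en-3} is strictly negative for all sufficiently small $t>0$, which yields $I_\lambda(tw)<0$ for such $t$. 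Since $tw\in\tilde W_0^{s,M}(\Omega)$, this gives $\inf_{\tilde W_0^{s,M}(\Omega)}I_\lambda\le I_\lambda(tw)<0$, proving the proposition.

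\textbf{Main obstacle.} There is essentially no hard analytic point here; the only thing one must be careful about is choosing the right regime ($[tw]_{(s,M)}<1$) so that Lemma~\ref{lem2}$(iii)$ gives the bound $F(tw)\le t^{m_0}$ with the \emph{larger-from-below} exponent $m_0$ rather than $m^0$ --- it is precisely the inequality $q<m_0$ that makes the modular term negligible compared with the $L^q$ term near the origin. One should also note that $w$ can genuinely be taken non-zero with $\|w\|_{L^q(\Omega)}\neq0$: any $w\in C_c^\infty(\Omega)\setminus\{0\}$ works, lies in $\tilde W_0^{s,M}(\Omega)$ by density, and has positive $L^q$-norm. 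After rescaling one arrives at \eqref{pf-en-3} exactly as above.
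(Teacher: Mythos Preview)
Your proof is correct and follows essentially the same approach as the paper: fix a non-zero test function, estimate $I_\lambda(tw)$ for small $t$ via Lemma~\ref{lem2}(iii) and hypothesis $(B)$, and exploit $q<m_0$ to make the right-hand side negative. The only cosmetic difference is that you normalize $[w]_{(s,M)}=1$ and (correctly) use the lower constant $C_1$ and the $L^q$-norm, whereas the paper keeps $\|v\|$ unnormalized.
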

\begin{proof}
	Fix $v\in\tilde{W}_{0}^{s,M}(\Omega)$, $v\neq0$ and $v\geq 0$ in $\Omega$. Using relation $(iii)$ in Lemma \ref{lem2} and condition $(B)$ we obtain \begin{align*}
	I_{\lambda}(tv) &= F(tv)-\lambda\int_{\Omega}G(x,tv)dx\\
	&\leq t^{m_{0}}\|v\|^{m_{0}}-\lambda C_{2} t^{q}\|v\|^{q},
	\end{align*}
	for $t$ small enough. Taking into account $q<m_{0}$, we infer that $I_{\lambda}(tv)<0.$ The proof of Proposition \ref{energie} is complete.
\end{proof}
Since Proposition \ref{energie} holds it follows that $u_{1}\in\tilde{W}_{0}^{s,M}(\Omega)$ is a non-trivial weak solution of problem \eqref{eq}.

\begin{lemma}\label{geo1}
	Assume the hypotheses of Theorem \ref{thm} are fulfilled. Then there exists $\lambda^{*} > 0$ such that for any $\lambda\in ]0,\lambda_{*}[$ there exist $\rho,\alpha>0$ such that $I_{\lambda}(u)\geq \alpha>0$ for any $u\in\tilde{W}_{0}^{s,M}(\Omega) $ with $\|u\|=\rho$.
\end{lemma}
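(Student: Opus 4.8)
The goal is a mountain-pass--type geometry estimate: for $\|u\|=\rho$ with $\rho$ small, the energy $I_\lambda(u)=F(u)-\lambda\int_\Omega G(x,u)\,dx$ is bounded below by a positive constant, provided $\lambda$ is small. The plan is to bound $F(u)$ from below on the sphere using part $(iii)$ of Lemma~\ref{lem2} (since $\rho<1$), bound the $G$-term from above using hypothesis $(B)$ together with the compact embedding $\tilde W_0^{s,M}(\Omega)\hookrightarrow L^q(\Omega)$ from Remark~\ref{rem}, and then choose $\rho$ and $\lambda^*$ so that the difference stays positive.

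First I would fix $\rho\in(0,1)$ to be chosen later and take any $u$ with $\|u\|=[u]_{(s,M)}=\rho<1$. By Lemma~\ref{lem2}$(iii)$ we have $F(u)\ge [u]_{(s,M)}^{m^0}=\rho^{m^0}$. Next, the continuous (in fact compact) embedding of Remark~\ref{rem} gives a constant $C_3>0$ with $\|u\|_{L^q(\Omega)}\le C_3\|u\|$, and hypothesis $(B)$ yields $\int_\Omega G(x,u)\,dx\le C_2\|u\|_{L^q(\Omega)}^q\le C_2 C_3^q\,\rho^q$. Combining,
\begin{equation}\label{geo1est}
I_\lambda(u)\ge \rho^{m^0}-\lambda C_2 C_3^q\,\rho^q=\rho^q\bigl(\rho^{m^0-q}-\lambda C_2 C_3^q\bigr).
\end{equation}
Since $q<m_0\le m^0$, the exponent $m^0-q$ is positive, so $\rho^{m^0-q}\to 0$ as $\rho\to 0$ but is nonetheless positive for each fixed $\rho\in(0,1)$. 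Fix any such $\rho$; then setting $\lambda^*:=\dfrac{\rho^{m^0-q}}{2C_2 C_3^q}$, for every $\lambda\in(0,\lambda^*)$ the bracket in \eqref{geo1est} is at least $\tfrac12\rho^{m^0-q}$, whence $I_\lambda(u)\ge \tfrac12\rho^{m^0+q}=:\alpha>0$ for all $u$ with $\|u\|=\rho$, as required.

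The only point demanding care is the direction of the modular estimate on the small sphere: one must use $(iii)$ of Lemma~\ref{lem2}, which on $\{[u]_{(s,M)}<1\}$ gives $F(u)\ge [u]_{(s,M)}^{m^0}$ (the larger exponent providing the \emph{lower} bound when the norm is below $1$), not the coercivity estimate $(ii)$ used earlier for large norms. With $q<m_0\le m^0$ this still beats the $\rho^q$ term from the nonlinearity for $\rho$ small, which is exactly what makes the geometry work; the rest is the routine choice of constants above. Note also that $\rho^{m^0+q}=\rho^q\cdot\rho^{m^0}$ and $\tfrac12\rho^{m^0-q}\cdot\rho^q=\tfrac12\rho^{m^0}$, so more precisely $\alpha=\tfrac12\rho^{m^0}$; either expression works as the claimed positive lower bound.
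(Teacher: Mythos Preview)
Your proof is correct and follows essentially the same approach as the paper: bound $F(u)$ below by $\rho^{m^0}$ via Lemma~\ref{lem2}$(iii)$ on the small sphere, bound $\int_\Omega G(x,u)\,dx$ above by $C_2 C_3^q\rho^q$ via $(B)$ and the embedding of Remark~\ref{rem}, and then choose $\lambda^*$ proportional to $\rho^{m^0-q}$. The only cosmetic difference is that the paper additionally insists on $\rho<\|u_1\|$ (splitting into the cases $\|u_1\|<1$ and $\|u_1\|>1$, in the latter replacing $m^0$ by $m_0$) so that the first solution $u_1$ lies outside the sphere for the subsequent mountain-pass argument; this extra constraint is not part of the lemma's statement, and your choice of any $\rho\in(0,1)$ already proves what is claimed.
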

\begin{proof}
	In light of Remark \ref{rem}, there exists a positive constant $c_{1}$ such that $$\|u\|_{L^{q}(\Omega)}\leq c_{1}\|u\|,\ \forall u\in \tilde{W}_{0}^{s,M}(\Omega).$$
	
	We fix $\rho\in]0,\|u_{1}\|[$.\\
	\textbf{Case $1$: } $\|u_{1}\|<1$.
	Invoking $(iii)$ in Lemma \ref{lem2} and $(A)$, we deduce that
	\begin{align*}
	I_{\lambda}(u)&\geq \|u\|^{m^{0}}-\lambda C_{2} c_{1}^{q}\|u\|^{q}\\&=\rho^{q}(\rho^{m^{0}-q}-\lambda C_{2}c_{1}^{q}),
	\end{align*}
	
	for any $u\in\tilde{W}_{0}^{s,M}(\Omega) $ with $\|u\|=\rho$. Put $\lambda^{*}=\frac{\rho^{m^{0}-q}}{3 C_{2}c_{1}^{q}}$.
	Then, for any $\lambda\in ]0,\lambda^{*}[$, we obtain
	$$I_{\lambda}(u)\geq \alpha>0, \ \ \forall u\in \tilde{W}_{0}^{s,M}(\Omega) \ \ \mbox{and} \ \ \|u\|=\rho,$$
	where $\alpha=\displaystyle\frac{\rho^{m^{0}-q}}{3}$.\\
	\textbf{ Case $2$:} $1<\|u_{1}\|$. It sufficient to replace $m^{0}$ by $m_{0}$ in the previous case.
	
	This ends the proof.
\end{proof}

\begin{proof}[\bf Proof of Theorem \ref{thm} completed.]
	Using Lemma \ref{geo1} and the Mountain Pass Theorem (see Theorem 2.1 in \cite{9}) we deduce that there exists a sequence $(u_{n})\subset\tilde{W}_{0}^{s,M}(\Omega)$ such that \begin{equation}\label{ps}
	I_{\lambda}(u_{n})\rightarrow c>0\ \ \text{and}\ \  I_{\lambda}^{'}(u_{n})\rightarrow0,
	\end{equation}
	where $$ c = \inf_{\gamma \in \Gamma} \max_{0 \leq t \leq 1} J( \gamma(t))$$
	and  $$ \Gamma= \left\{ \gamma \in \mathcal{C}([0,1],X),\ \gamma(0) = 0,\ \gamma(1) =u_{1}\right\}. $$
	By relation \eqref{ps} and proposition \ref{coe} we obtain that $(u_{n})$ is bounded and thus passing eventually to a subsequence, still denoted by  $(u_{n})$, we may assume that there exists $u_{2}\in \tilde{W}_{0}^{s,M}(\Omega)$ such that $u_{n}$ converges weakly to $u_{2}$. %Since $\tilde{W}_{0}^{s,M}(\Omega)$ is compactly embedded in $L^{r}(\Omega)$, for all $1\leq r<p^{*}$, it follows that $u_{n}$ converges strongly to $u_{2}$ in $L^{r}(\Omega)$, for all $1\leq r<p^{*}$.
	Hence
	\begin{align*}
	\langle I_{\lambda}^{'}(u_{n})- I_{\lambda}^{'}(u_{2}),u_{n}-u_{2}\rangle&=\langle F^{'}(u_{n})-F^{'}(u_{2}),u_{n}-u_{2}\rangle\\&-\lambda\int_{\Omega}[g(x,u_{n})-g(x,u_{2})](u_{n}-u_{2})dx\rightarrow0,\ n\rightarrow+\infty,
	\end{align*}
	
	where $F$ is defined in relation \eqref{F}. Therefore, by combining Remark \ref{rem} and Lemma \ref{lem3},  we can deduce that $u_{n}$ converges strongly to $u_{2}$ in $\tilde{W}_{0}^{s,M}(\Omega)$. It follows, in view of relation \eqref{ps}, that
	$$I_{\lambda}(u_{2})=c>0\ \ \text{and}\ \ I_{\lambda}^{'}(u_{2})=0.$$
	We conclude that $u_{2}$ is a critical point of $I_{\lambda}$ and so it is a non trivial second solution of \eqref{eq}. Since $I_{\lambda}(u_{1})<0$, we can conclude that $u_{2}\neq u_{1}$. The proof of Theorem \ref{thm} is now complete.
\end{proof}

\bigskip
\noindent \textsc{\textsc{Sabri Bahrouni and Hichem Ounaies}}\\
Mathematics Department, Faculty of Sciences, University of Monastir,
5019 Monastir, Tunisia\\
 (sabri.bahrouni@fsm.rnu.tn, hichem.ounaies@fsm.rnu.tn)

\noindent \mbox{} \\
\noindent and \\
\noindent \mbox{} \\
\noindent \textsc{Leandro  S. Tavares}\\
Centro de Ci\^{e}ncias e Tecnologia\\
Universidade Federal do Cariri\\
Juazeiro do Norte, CE, Brazil,\\
CEP:63048-080 \\
\texttt{leandro.tavares@ufca.edu.br} \\
\noindent \mbox{} \\


\begin{thebibliography}{99}
	\bibitem{1} R. Adams, Sobolev Spaces, Academic Press, New York, 1975.
	%\bibitem{2} E. Azroul, A. Benkirane and M.Srati, Introduction to fractional Orlicz-Sobolev spaces, arxiv: 1807.11753.
	\bibitem{3}  G. Autuori and P. Pucci,  Elliptic problems involving the fractional Laplacian in $\mathbb{R}^{N}$,   {\it J. Differential Equations,} {\bf 25} (2013), 2340-2362
	\bibitem{4} A. Bahrouni,  Trudinger-Moser type inequality and existence of
	solution for perturbed non-local elliptic operators with exponential
	nonlinearity, {\it Commun. Pure Appl. Anal}, {\bf 16} (2017), 243-252.
	\bibitem{5} A. Bahrouni, Comparison and sub-supersolution principles for the fractional $p(x)-$Laplacian, {\it J. Math. Anal. Appl}. {\bf 458}
	(2018) 1363-1372.
	\bibitem{6} A. Bahrouni, V. Radulescu, On a new fractional Sobolev space and application to nonlocal variational problems with
	variable exponent, {\it Discrete Contin. Dyn. Syst. Ser. S,} {\bf 11} (2018) 379-389.
	\bibitem{Benci-Fortunato} V. Benci, D. Fortunato, L. Pisani, Solitons
	like solutions of a Lorentz invariant equation in dimension 3, Rev. Math.
	Phys. 10 (1998), 315-344.		
\bibitem{7} J. F. Bonder and A. M. Salort, Fractional order Orlicz-Sobolev spaces, {\it Journal of Functional Analysis}, 2019, https://doi.org/10.1016/j.jfa.2019.04.003.
	\bibitem{8} H. Brezis, Analyse fonctionnelle: th\'eorie et applications, Masson, Paris, 1992.
	\bibitem{9} Ph. Cl\'{e}ment, M. Garc\'{\i}a-Huidobro, R. Man\'{a}sevich, and K. Schmitt, Mountain pass type solutions for quasilinear elliptic
	equations, {\it Calc. Var.} {\bf 11} (2000), 33-62.
	\bibitem{10} C. Bucur and E. Valdinoci, Nonlocal Diffusion and Applications, Lecture Notes of the Unione Matematica Italiana, 20. Springer,[Cham]; Unione Matematica Italiana, Bologna, 2016.
	\bibitem{11}  L. Caffarelli, J.-M. Roquejoffre and Y. Sire,  Variational problems for free boundaries for the fractional Laplacian,
	{\it J. Eur. Math. Soc. (JEMS)}, {\bf 12} (2010), 1151-1179.
	\bibitem{12}  L. Caffarelli, S. Salsa and L. Silvestre,  Regularity estimates for the solution and the free boundary of the obstacle problem for the fractional Laplacian, {\it Invent. Math}, {\bf 171} (2008), 425-461.
	\bibitem{13}  L. Caffarelli and L. Silvestre,  An extension problem related to the fractional Laplacian,
	{\it Comm. Partial Differential Equations,} {\bf 32} (2007), 1245-1260.
	\bibitem{14}  S. Dipierro, M. Medina and E. Valdinoci, Fractional Elliptic Problems with Critical Growth in the Whole
	of $\mathbb{R}^n$, Lecture Notes, Scuola Normale Superiore di Pisa, 15. Edizioni della Normale, Pisa, 2017.
%	\bibitem{Daco} B. Dacorogna, Introduction to the Calculus of Variations, ICP
	London (2004).
	\bibitem{Die} Diening L., Harjulehto P., H\"{a}st\"{o}P., and Ruzicka M. Lebesgue and Sobolev spaces with variable exponents. Lecture Notes in Mathematics,vol. 2017, Springer-Verlag, Heidelberg, 2011.
	\bibitem{Fan} X. Fan and D. Zhao,  On the spaces $L^{p(x)}(\Omega)$ and $W^{m,p(x)}(\Omega)$,
	{\it J. Math. Anal. Appl.}, {\bf 263} (2001), 424-446.
%\bibitem{FN0} N. Fukagai, K. Narukawa, Nonlinear eigenvalue problem for a
model equation of an elastic surface, Hiroshima Math. J. 25 (1995), 19-41.
\bibitem{Fuk} N. Fukagai, M. Ito and K. Narukawa, Positive solutions of quasilinear
elliptic equations with critical Orlicz-Sobolev nonlinearity on RN,
Funkcialaj Ekvacioj 49 (2006), 235-267.
	\bibitem{FN} N. Fukagai, K. Narukawa, {On the existence of
		multiple positive solutions of quasilinear elliptic eigenvalue problems,}
	Ann. Mat. Pura Appl. {\ 186} (2007), 539-564.
	
	\bibitem{Garcia} M. Garc\'{\i}a-Huidobro, V. K. Le, R. Man\'{a}sevich, and K. Schmitt, On principal eigenvalues for quasilinear elliptic differential
	operators: an Orlicz-Sobolev space setting, {\it Nonlinear Differential Equations Appl. (NoDEA)} {\bf 6} (1999), 207-225.
	\bibitem{15}  A. Kufner, O. John, S. Fucik, Function Spaces, Noordhoff, Leyden, 1977.
	\bibitem{16} U. Kaufmann, J.D. Rossi and R. Vidal, Fractional
	Sobolev spaces with variable exponents and fractional $p(x)$-Laplacians, {http://mate.dm.uba.ar/$\sim$jrossi/krvP.pdf}.
	\bibitem{17} J. Lamperti, On the isometries of certain function-spaces, {\it Pacific J. Math,} {\bf 8} (1958), 459-466.
	\bibitem{18} Lieberman, G. The natural generalizationj of the natural conditions of Ladyzhenskaya and Urall'tseva for elliptic
	equations. {\it Communications in Partial Differential Equations,} {\bf 16} 311-361, (1991).
	\bibitem{19} M. Mih\u{a}ilescu and V. R\u{a}dulescu, Eigenvalue problems associated to nonhomogeneous differential operators in Orlicz-
	Sobolev spaces, {\it Analysis and Applications,} {\bf 6} (2008), 1-16.
	%\bibitem{20} M. Mih\u{a}ilescu and V. R\u{a}dulescu, Neumann problems associated to nonhomogeneous differential operators in Orlicz-
	%Sobolev spaces, {\it Ann. Inst. Fourier,} {\bf 58} (2008), 2087-2111.
	
	\bibitem{21} G. Molica Bisci  and V. R\u adulescu, Ground state solutions of scalar field fractional Schr\"odinger equations, {\it Calc. Var. Partial Differential Equations}, {\bf 54} (2015), 2985-3008.
	\bibitem{22} G. Molica Bisci, V. R\u adulescu and R. Servadei, Variational Methods for Nonlocal Fractional Problems,
	Encyclopedia of Mathematics and its Applications, 162. Cambridge University Press, Cambridge, 2016.
	\bibitem{23} E. Di Nezza, G. Palatucci and E. Valdinoci,  Hitchhiker's guide to the fractional Sobolev spaces,
	{\it Bull. Sci. Math}, {\bf 136} (2012), 521-573.
	\bibitem{orlicz} W. Orlicz, \"Uber konjugierte Exponentenfolgen, {\it Studia Math},{\bf 3} (1931), 200-212.
	\bibitem{24} M. M. Rao, Z. D. Ren, Theory of Orlicz Spaces, Marcel Dekker, Inc, New York,
	1991.
	\bibitem{radrep}  V.D. R\u adulescu and D.D. Repov\v{s}, Partial Differential Equations with Variable Exponents:
	Variational Methods and Qualitative Analysis,  {\it Monographs and Research Notes in Mathematics. CRC Press, Boca Raton, FL,} 2015.
    \bibitem{Ruz} M. Ruzicka, Electrorheological Fluids: Modeling and Mathematical Theory, Springer-Verlag, Berlin, 2002.
	\bibitem{25} R. Servadei and E. Valdinoci,   Mountain pass solutions for non-local elliptic operators,
	{\it J. Math. Anal. Appl}, {\bf 389} (2012), 887-898.
	\bibitem{26}  R. Servadei and E. Valdinoci, Variational methods for non-local operators of elliptic type,
	{\it Discrete Contin. Dyn. Sys}, {\bf 33} (2013), 2105-2137.
	\bibitem{28} M. Struwe, Variational Methods: Applications to Nonlinear Partial Differential Equations and Hamiltonian Systems, Springer, Heidelberg, 1996.
\bibitem{Salort} Ariel M. Salort, A fractional Orlicz-Sobolev eigenvalue problem and related Hardy inequalities,
arXiv e-prints (2018), arXiv:1807.03209.
	\bibitem{27} M. Xianga, B. Zhang, D. Yanga, Multiplicity results for variable-order fractional Laplacian
	equations with variable growth, {\it Nonlinear Analysis}, {\bf 178} (2019), 190-204.
	\bibitem{zi} V. Zhikov, Averaging of functionals of the calculus of variations and elasticity theory,
	{ \it Izv. Akad. Nauk SSSR Ser. Mat}, {\bf 50} (1986), 675-710.
\end{thebibliography}
\end{document}